\newtheorem{theorem}{Theorem}[section]
\newtheorem{proposition}[theorem]{Proposition}
\newtheorem{lemma}[theorem]{Lemma}
\newtheorem{corollary}[theorem]{Corollary}
\theoremstyle{remark}
\newtheorem{remark}[theorem]{Remark}
\newtheorem{example}[theorem]{Example}
\newcommand{\s}{\sigma} 
\newcommand{\CC}{\mathbb{C}}
\newcommand{\R}{\mathbb{R}}
\newcommand{\A}{\hat{a}}
\newcommand{\B}{\hat{b}}
\newcommand{\e}{\epsilon}
\newcommand{\U}{\mathcal{U}}
\newcommand{\V}{\mathcal{V}}
\newcommand{\BB}{\overline{B}}
\newcommand{\ls}{\langle}
\newcommand{\rs}{\rangle}
\newcommand{\tnz}{\otimes}
\newcommand{\bz}{\setminus}
\newcommand{\uu}{\xi}
\newcommand{\co}{\mathsf{c}}
 \newcommand{\spn}{\mathrm{span}}
\newcommand{\soc}{\mathrm{soc}}
\begin{document}
\title[Identifying derivations through  the spectra of their values]{Identifying
derivations through  the spectra of their values}

\author{Matej Bre\v sar}
\author{Bojan Magajna}
\author{\v Spela \v Spenko}
\address{M. Bre\v sar,  Faculty of Mathematics and Physics,  University of Ljubljana,
 and Faculty of Natural Sciences and Mathematics, University
of Maribor, Slovenia} \email{matej.bresar@fmf.uni-lj.si}
\address{ B. Magajna, Faculty of Mathematics and Physics,  University of Ljubljana, Slovenia} 
\email{bojan.magajna@fmf.uni-lj.si}
\address{\v S. \v Spenko,  Institute of  Mathematics, Physics, and Mechanics,  Ljubljana, Slovenia} \email{spela.spenko@imfm.si}

\begin{abstract}
 We consider the relationship between derivations $d$ and $g$ of a Banach algebra
$B$ that satisfy $\s(g(x)) \subseteq \s(d(x))$ for every $x\in B$, where $\s(\, .
\,)$ stands for the spectrum. It turns out that in some basic situations, say if
$B=B(X)$, the only possibilities are that $g=d$, $g=0$, and, if $d$ is an inner
derivation implemented by an algebraic element of degree $2$, also $g=-d$. The
conclusions in more complex classes of algebras are not so simple, but are of a
similar spirit. A rather definitive result is obtained for von Neumann algebras. In
general  $C^*$-algebras we have to make some adjustments, in particular we restrict
 our attention to
 inner derivations implemented by selfadjoint elements. We also consider a related
condition  $\|[b,x]\|\leq M\|[a,x]\|$  for all selfadjoint elements $x$ from a
$C^*$-algebra  $B$, where $a,b\in B$  and $a$ is   normal.
\end{abstract}

\keywords{Derivation,  spectrum,  Banach algebra, von Neumann algebra, $C^*$-algebra.}
\thanks{2010 {\em Math. Subj. Class.} 46H05,  46L57, 47B47. }
\thanks{Supported by ARRS Grant P1-0288.}
\maketitle

\section{Introduction}

Let $B$ be a Banach algebra. By $\s(x)$ (resp. $\rho(x)$) we denote the spectrum (resp. spectral radius) of $x\in B$.
Finding conditions under which a linear map $f$ from $B$ onto another Banach algebra that satisfies  $\s(f(x))=\s(x)$  for every $x\in B$ is necessarily
 a Jordan homomorphism is an important and widely open problem, raised by  Kaplansky \cite{K}.
  Motivated by 
some of its aspects, the first and the third author have recently considered the question whether the equality $\s(ax) = \s(bx)$ for every $x\in B$, where $a,b\in B$ are fixed elements, implies $a=b$ \cite{BS}. An affirmative answer has been obtained for some classes of algebras, including  $C^*$-algebras.  
Now one may wonder  what can be said about 
other pairs of linear maps on $B$ such that
 the spectra of their values coincide  at each point. In this paper we will consider a pair of derivations $d$ and $g$. In fact, in most of our results we will not need to assume the equality of the spectra, but only
\begin{equation}\label{ena}\tag{S}
\s(g(x)) \subseteq \s(d(x)) \quad\text{for all  $x\in B$.} 
\end{equation}
By studying \eqref{ena} we  follow the line of investigation of spectral properties of values of derivations. 
Let us mention some topics in this area:  
   derivations and their products that have quasinilpotent values \cite{ CKL, Lee,   TS}, spectrally bounded derivations \cite{ BrMa}, and derivations all of whose values have a finite spectrum  \cite{ BoMa,BoSe,BrSe0}.
  A topic of a  different kind, which, however, is closer to the problem considered here than it may 
  seem at a first glance, is the study of  derivations $d$ and $g$ such that the range of $g(x)$ is contained in the range of $d(x)$ for every $x\in B$. In their seminal work \cite{JW}, Johnson and Williams considered such a range inclusion for the case
   where $B=B(H)$ and $d$ is an 
   inner derivation implemented by  a normal operator.  See also \cite{Fong, KS1} for further development.
   
   When can \eqref{ena} occur? A trivial possibility is that $g=d$.  If the range of $d$ consists of 
   non-invertible elements, e.g., if $d$ is an inner derivation implemented by an element from a 
   proper ideal, then we can take $g=0$. There is another, less obvious possibility: $g=-d$, where $d$ 
   is inner and implemented by an algebraic element of degree $2$. This situation is briefly discussed in Section 2. In Section 3 we show that the aforementioned three possibilities are also the only ones if $B$ is a primitive Banach algebra with nonzero socle. Using this result we are able to handle \eqref{ena} for a general semisimple Banach algebra $B$ under the assumption that $d(x)$ has a finite spectrum for every $x\in B$. Section 4 is devoted to the case where $B$ is a von Neumann algebra.  The main result  says that  \eqref{ena} implies that
$B$ can be decomposed into three parts such that on each of them one of the  possibilities $g=d$, $g=0$, and $g=-d$ holds.   In Section 5 we consider the case where $B$ is a $C^*$-algebra and derivations are inner, $d:x\mapsto [a,x]$ and $g:x\mapsto [b,x]$. First we show that $b$ lies in $\{a\}''$, the (relative) bicommutant of $\{a\}$, provided that $a$ is normal. Other results are of a slightly different nature. Consider the condition
 $\rho([b,x])\le M\rho([a,x])$ which clearly follows from  \eqref{ena} (with $M=1$). If both $a,b$ are selfadjoint, then the commutators $[a,x]$ and $[b,x]$ are anti-selfadjoint whenever $x$ is selfadjoint. In this case we can therefore write   our condition     as 
 \begin{equation}\label{dva}\tag{N}
||[b,x]||\le  M||[a,x]||  \quad\text{for all selfadjoint $x\in B$.} 
\end{equation}
 According to \cite[Lemma 1.1]{JW}, we can view \eqref{dva} as a dual problem to the range inclusion 
 problem. Following \cite{JW} and consecutive papers \cite {Fong, KS1} we consider  the condition 
 \eqref{dva} for a normal element $a$ in a $C^*$-algebra $B$.     The most complete result, however, is obtained for  selfadjoint elements $a,b$ under the assumption  that the equality (with $M=1$) holds in \eqref{dva} (equivalently, 
 $\rho([b,x]) = \rho([a,x])$ for  all selfadjoint $x\in B$).

A word about terminology and notation. By a Banach algebra we shall mean a complex Banach algebra. For simplicity we assume that all our algebras have  identity elements. 
 We write $Z(B)$ for the center of $B$.

\section{Commutators with symmetric spectra}  

If an element $a$ from a Banach algebra $B$ is such that for every $x\in B$ the spectrum of $[a,x]$ is symmetric in the sense that $\s\bigl([a,x]\bigr)=-\s([a,x])$, then the spectral inclusion condition \eqref{ena} is fulfilled for $g=-d$, $d=[a,-]$. This situation is of special interest. Let us show that it can indeed occur.

\begin{lemma}\label{alg}
Let $B$ be a Banach algebra. 
If $e\in B$ is an idempotent and $x\in B$ is arbitrary, then $[e,x]$ is similar to $-[e,x]$. In particular, $\s([e,x])=-\s([e,x])$.
\end{lemma}

\begin{proof}
Take $s=1-2e$. Then $s=s^{-1}$ and $s[e,x]s^{-1}=-[e,x]$.
\end{proof}

\begin{lemma}\label{alg2}
Let $B$ be a Banach algebra in which for all $x,y\in B$, $xy=1$ implies $yx=1$.  
If $a\in B$ is algebraic of degree 2, then $\s([a,x])=-\s([a,x])$ for all $x\in B$. 
\end{lemma}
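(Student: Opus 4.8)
The plan is to reduce first to the case where $a^2$ is a scalar. Since $[a,x]$ does not change if we replace $a$ by $a-\tfrac{\alpha}{2}$, where $a^2=\alpha a+\beta$, I may assume from the outset that $a^2=\gamma 1$ for some $\gamma\in\CC$ (with $a\notin\CC 1$, as $a$ has degree $2$). Writing $d=[a,x]$, a one-line computation gives $ad=\gamma x-axa$ and $da=axa-\gamma x$, so that
\begin{equation*}
ad=-da.
\end{equation*}
Thus $a$ anticommutes with every value $d$ of the derivation, and the argument splits according to whether $\gamma\neq 0$ or $\gamma=0$.

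If $\gamma\neq 0$, then $a$ is invertible with $a^{-1}=\gamma^{-1}a$, and the anticommutation relation rearranges to $ada^{-1}=-d$. Hence $d$ is similar to $-d$, so $\s(d)=\s(-d)=-\s(d)$, as desired. This case absorbs the situation of distinct roots $r_1\neq r_2$ (there $e=(a-r_2)/(r_1-r_2)$ is an idempotent, and one could equally invoke Lemma~\ref{alg}). Observe that this step uses nothing beyond the invertibility of $a$; the standing hypothesis on $B$ has not yet been used.

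The remaining, and genuinely harder, case is $\gamma=0$, i.e.\ $a^2=0$. Here the hypothesis enters through the following observation: if $xy=1$ forces $yx=1$, then a left- or right-invertible element is invertible, and consequently $uv$ is invertible exactly when $vu$ is; combined with the always-valid identity $\s(uv)\setminus\{0\}=\s(vu)\setminus\{0\}$ this upgrades to the \emph{exact} equality
\begin{equation*}
\s(uv)=\s(vu)\qquad(u,v\in B).
\end{equation*}
The idea is to feed into this the relations forced by $a^2=0$. Two are available: the intertwining $a(d-\lambda)=-(d+\lambda)a$ (a restatement of $ad=-da$), and the factorizations $(x+a)(x-a)=x^2+d$ and $(x-a)(x+a)=x^2-d$, which already yield $\s(x^2+d)=\s(x^2-d)$.

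The main obstacle is to remove the ``diagonal pollution'' from these relations. A single factorization $\lambda-d=uv$, $\lambda+d=vu$ would prove $\s(d)=-\s(d)$ at once, but a short check shows that no affine choice of $u,v$ in $a,x$ realizes $\pm d$ as a product without a leftover term such as $x^2$, and since $x^2$ does not commute with $d$, the identity $\s(x^2+d)=\s(x^2-d)$ does not split as $\s(x^2)\pm\s(d)$. Likewise the intertwining transports $d-\lambda$ to $d+\lambda$ only through the non-invertible element $a$, so it cannot by itself convert invertibility of one factor into invertibility of the other. The real work of this case is therefore to combine these relations—most plausibly by passing to $M_2(B)$ and exploiting $\s(uv)=\s(vu)$ there to arrange block-triangular products with matching diagonal blocks—so as to cancel the leftover term and conclude, for every $\lambda\neq 0$, that $d-\lambda$ is invertible precisely when $d+\lambda$ is. I expect this cancellation, and the verification that the spectral bookkeeping at $0$ is controlled by the hypothesis, to be the crux.
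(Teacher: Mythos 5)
Your reduction to $a^2=\gamma 1$ by completing the square, and the case $\gamma\neq 0$ (where $a$ is invertible, $ad=-da$ gives $ada^{-1}=-d$, and $d$ is similar to $-d$) are correct and essentially match the paper's treatment of the non-nilpotent case. But the case $a^2=0$, which you yourself flag as the crux, is not proved: you record the exact equality $\s(uv)=\s(vu)$ and the factorizations $(x\pm a)(x\mp a)=x^2\pm d$, observe correctly that these do not combine to yield $\s(d)=-\s(d)$ because of the $x^2$ term, and then only speculate that some block-triangular manipulation in $M_2(B)$ might cancel it. No such argument is supplied, so the proposal is incomplete exactly where the hypothesis on $B$ has to do its work.

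The missing idea is a different factorization, in which the ``leftover term'' is the identity rather than $x^2$: since $a^2=0$,
\[
(1+xa)(1-ax)=1-[a,x]\qquad\text{and}\qquad (1-xa)(1+ax)=1+[a,x].
\]
By replacing $x$ with a scalar multiple it suffices to show that invertibility of $1-[a,x]$ forces invertibility of $1+[a,x]$. If $(1+xa)(1-ax)$ is invertible, then $1+xa$ is right invertible and $1-ax$ is left invertible, hence both are invertible by your standing hypothesis; Jacobson's lemma ($1-uv$ invertible iff $1-vu$ invertible) then makes $1+ax$ and $1-xa$ invertible, and multiplying these two in the opposite pairing gives the invertibility of $(1-xa)(1+ax)=1+[a,x]$. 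Note the trick is \emph{not} to exhibit $1-d$ and $1+d$ as $uv$ and $vu$ for a single pair $u,v$ (which, as you observed, fails); it is to transfer invertibility factor by factor and then reassemble. With this step the $a^2=0$ case closes and your outline becomes a complete proof.
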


\begin{proof}
Replacing $x$ by $\alpha x+\beta$, where $\alpha$ and $\beta$ are suitable scalars, the proof reduces to two cases: $a$ is an idempotent and $a$ is nilpotent of nilpotency degree 2.  As the first one has already been treated in Lemma \ref{alg}, we may suppose that $a^2=0$. It suffices to prove that $-1\in \s([a,x])$ implies $1\in \s([a,x])$ for every $x\in B$.
We have 
$$
(1+xa)(1-ax)=1-[a,x], 
$$
and
$$
-(1-xa)(1+ax)=-1-[a,x].
$$
If $1-[a,x]$ is invertible, then, by our assumption, $1+xa$ and $1-ax$ are invertible. Consequently, 
(as it is well-known) $1+ax$ and $1-xa$ are invertible. By the above equality this implies
the invertibility of $-1-[a,x]$. 
\end{proof}

Let us show that the assumption that  $xy=1$ implies $yx=1$ is not redundant.
We give an example of an element $a \in B(H)$  such that $a^2=0$ and there exists $x\in B(H)$ such that $\s([a,x])$ is not symmetric.

\begin{example}
Let $a,x\in B(H)\cong B(H\bigoplus H)\cong M_2(B(H))$ be of the form
$$
a=
\left( \begin{array}{cc}
0 & 1 \\
0 & 0
\end{array}
\right),
\quad
x=
\left( \begin{array}{cc}
y& 0 \\
u & 0 
\end{array}
\right).
$$
Then 
$$
[a,x]=
\left( \begin{array}{cc}
u & -y \\
0 & -u
\end{array}
\right),
\quad
1+[a,x]=
\left( \begin{array}{cc}
1+u & -y \\
0 & 1-u 
\end{array}
\right),
\quad
1-[a,x]=
\left( \begin{array}{cc}
1-u & y \\
0 & 1+u 
\end{array}
\right).
$$
It suffices to find $u$ such that $1-u$ is right invertible, $1+u$ is left invertible and $\{0\}\neq \ker(1-u)$ is isomorphic 
to $\mathrm{im}(1+u)^\bot$. 
For then we take for $y$ an operator which is an isomorphism from $\ker(1-u)$ onto $\mathrm{im}(1+u)^\bot$ and is zero on $\ker(1-u)^\bot$.
Then $1+[a,x]$ is bijective (hence invertible), but $1-[a,x]$ is not injective since $\ker(1-u)\neq \{0\}$. One possible choice
for $u$ is 
$\left( \begin{array}{cc}
1+s^* & 0 \\
0 & -1+s
\end{array}
\right),$
where $s$ is the unilateral shift and $s^*$ its adjoint. Indeed, $1-u=
\left( \begin{array}{cc}
-s^* & 0 \\
0 & 2-s
\end{array}
\right)$ is right invertible (since $s^*$ is right invertible and $2-s$ is invertible), $1+u=
\left( \begin{array}{cc}
2+s^* & 0 \\
0 & s
\end{array}
\right)$ is left invertible and $\ker(1-u)\cong \ker s^*=\mathrm{im}s^\bot\cong\mathrm{im}(1+u)^\bot.$
\end{example}

Roughly speaking, we have shown that algebraic elements of degree 2 often generate derivations whose values have symmetric spectra. Later, in Lemma \ref{alg3}, we shall see that these are also the only natural examples of such elements.

\section{Results on Banach algebras}

We begin our discussion on the spectral inclusion condition \eqref{ena} in algebras in which the spectrum can be easily computed at least for some elements. The prototype example we have in mind is $B(X)$, the algebra of all bounded operators on a Banach space $X$. In this case the operators of finite rank have an easily approachable spectrum. Actually, we will work in a slightly more general setting of primitive Banach algebras with nonzero socle  and we will replace \eqref{ena} with a technically weaker condition $\s(g(x))\subseteq \s(d(x))\cup\{0\}$. This will be needed for further applications.

Recall that a semiprime  Banach algebra $B$ is said to have a nonzero socle if $B$ has minimal one-sided ideals. In this case the socle of $B$, soc$(B)$, is defined as the sum of all minimal left ideals of $B$ (equivalently, the sum of all minimal right ideals of $B$). The socle has a particularly important role if $B$ is a primitive algebra.

\begin{theorem}\label{rank1}
Let $B$ be a  primitive Banach algebra with nonzero socle. If derivations $d,g:B\to B$ satisfy $\s(g(x))\subseteq \s(d(x))\cup\{0\}$ for all $x\in {\rm soc}(B)$, then $g=\lambda d$  with $\lambda\in \{-1,0,1\}$.
\end{theorem}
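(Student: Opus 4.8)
The plan is to use the structure theory of primitive Banach algebras with nonzero socle. First I would realize $B$ as a dense algebra of operators on a complex vector space $X$ paired nondegenerately with a space $Y$ of functionals, so that $\soc(B)$ is exactly the ideal of finite-rank operators, spanned by the rank-one operators $x\tnz y$ (acting as $z\mapsto \ls z,y\rs\, x$), whose spectrum is $\{0,\ls x,y\rs\}$. A derivation of such an algebra leaves $\soc(B)$ invariant, and its restriction to the socle is spatial; thus I would fix operators $T,S\in\lin(X)$ (admitting adjoints $T^*,S^*$ on $Y$) with $d(u)=Tu-uT$ and $g(u)=Su-uS$ for all $u\in\soc(B)$. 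The whole problem then reduces to comparing the two commutators on rank-one inputs.

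Second, I would compute the relevant spectra. For $u=x\tnz y$ the commutator $[T,u]=(Tx)\tnz y-x\tnz(T^{*}y)$ has rank $\le 2$ and trace $0$, so its nonzero spectrum is $\{\pm\mu_T\}$ with $\mu_T^{2}=\tfrac12\,\mathrm{tr}([T,u]^{2})=\ls Tx,y\rs^{2}-\ls x,y\rs\ls T^{2}x,y\rs=:Q_T(x,y)$, and likewise $\s([S,u])\setminus\{0\}=\{\pm\sqrt{Q_S(x,y)}\}$. The inclusion $\s(g(u))\subseteq\s(d(u))\cup\{0\}$ then reads $\{\pm\sqrt{Q_S}\}\subseteq\{0,\pm\sqrt{Q_T}\}$, which for each $(x,y)$ forces $Q_S(x,y)=0$ or $Q_S(x,y)=Q_T(x,y)$; equivalently the product $Q_S\,(Q_S-Q_T)$ vanishes identically on $X\times Y$. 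Since the algebra of polynomial functions on $X\times Y$ is an integral domain (restrict to two-parameter families $x=x_0+sx_1$, $y=y_0+ty_1$ and use that $\CC[s,t]$ is a domain), one factor vanishes identically: either $Q_S\equiv 0$ or $Q_S\equiv Q_T$.

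Third, I would interpret the two alternatives. If $Q_S\equiv 0$, then fixing $x$ and viewing $Q_S(x,\cdot)$ as a quadratic form in $y$ shows that the rank-one form $\ls Sx,\cdot\rs^{2}$ equals the symmetric form $\ls x,\cdot\rs\,\ls S^{2}x,\cdot\rs$; comparing ranks forces $Sx$ to be a scalar multiple of $x$ for every $x$, hence $S$ is scalar and $g=0$ on $\soc(B)$, i.e.\ $\lambda=0$. If $Q_S\equiv Q_T$, I would rewrite the identity as $\ls(S-T)x,y\rs\,\ls(S+T)x,y\rs=\ls x,y\rs\,\ls(S^{2}-T^{2})x,y\rs$ and test it on vectors $x=e_i+\alpha e_j$ and functionals $y$ biorthogonal to $e_i,e_j$, so that $\ls x,y\rs=0$. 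Factoring the resulting identity as a product of two linear polynomials in $\alpha$ shows that off the diagonal $S$ and $T$ agree up to a sign that is constant along rows and along columns, hence globally constant; after replacing $T$ by $-T$ if necessary, $S$ and $T$ have equal off-diagonal parts, and a final test on two-coordinate $x,y$ pins down the diagonal, yielding $S=T+c\cdot 1$. Thus $g=\pm d$ on $\soc(B)$, i.e.\ $\lambda=\pm 1$.

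Finally I would pass from the socle to all of $B$. The map $h=g-\lambda d$ is a derivation vanishing on $\soc(B)$, so for $a\in\soc(B)$ and $b\in B$ one gets $0=h(ab)=h(a)b+a\,h(b)=a\,h(b)$; since $B$ is primitive, its socle is an essential ideal with zero annihilator, whence $h(b)=0$ for all $b$ and $g=\lambda d$ on $B$. The main obstacle I anticipate is the case $Q_S\equiv Q_T$: extracting a single global sign and the constant diagonal shift from the quadratic identity needs careful bookkeeping, together with a separate treatment of the low-dimensional cases where the "third index" arguments degenerate. That step, and the spatiality of derivations on the socle that underlies the whole reduction, are where I expect the real work to lie.
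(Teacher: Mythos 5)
Your strategy is sound, and its middle part takes a genuinely different route from the paper's. The paper works only with rank\nobreakdash-one elements $\xi\tnz f$ satisfying $f(\xi)=0$, extracts the pointwise dichotomy $f(b\xi)=0$ or $f((a\mp b)\xi)=0$, and then globalizes in two stages: for fixed $\xi$ the three sets are linear subspaces of $\{\xi\}^\bot$ covering $\{\xi\}^\bot$, so one of them is everything; and over $\xi$ the three resulting closed sets $X^0,X^-,X^+$ cover $X$, after which Baire's theorem plus a linear\nobreakdash-algebra argument shows one of them is all of $X$. You instead use all rank\nobreakdash-one elements and package the hypothesis into the single identity $Q_S\,(Q_S-Q_T)\equiv 0$; your trace computation of $Q_T$ is correct, and the Zariski\nobreakdash-density argument legitimately splits the problem globally into $Q_S\equiv 0$ (whence $S$ is scalar and $\lambda=0$) or $Q_S\equiv Q_T$. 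This disposes of the paper's three\nobreakdash-way case analysis in one stroke, which is a genuine simplification of that portion. Your closing step (a derivation vanishing on the socle vanishes everywhere, since the socle has zero annihilator in a prime algebra) is also fine; note that the paper avoids it by proving that every derivation of $B$ is $[a,-]$ for some $a\in B(X)$ globally --- a fact you need anyway for the spatial implementation on the socle, so its short proof should be included.

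The genuine soft spot is the one you flag yourself: extracting a single global sign from $Q_S\equiv Q_T$. As written, the ``sign constant along rows and columns'' bookkeeping is a plan, not an argument, and it reintroduces in disguise exactly the globalization difficulty the paper resolves with Baire: from the identity one gets, for each $x$, that $(S-T)x\in\CC x$ or $(S+T)x\in\CC x$, and one must show that a \emph{single} alternative holds for all $x$ (the two sets are not subspaces, so the union argument used for fixed $x$ does not apply). There are standard ways to close this: the paper's Baire\nobreakdash-category argument; the classical two\nobreakdash-vector trick (if $Cx_1\notin\CC x_1$ and $Dx_2\notin\CC x_2$, test the alternative on $x_1+\alpha x_2$ for infinitely many $\alpha$ to reach a contradiction); or, closer to your polynomial viewpoint, rewrite $Q_S\equiv Q_T$ as $\ls (S-T)x,y\rs\,\ls (S+T)x,y\rs=\ls x,y\rs\,\ls (S^2-T^2)x,y\rs$ and use that $\ls x,y\rs$ is an irreducible polynomial on a finite\nobreakdash-dimensional dual pair of dimension at least $2$, so it must divide one of the two bilinear factors, forcing $S\mp T\in\CC 1$ there (plus a patching step over finite\nobreakdash-dimensional blocks in the infinite\nobreakdash-dimensional case). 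Until one of these is carried out, your third step is incomplete; with it, the proposal becomes a correct proof along a different path from the paper's.
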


\begin{proof}
Let us recall some standard facts about the structure of $B$ (see, e.g., \cite[Section 31]{Dun}). 
There exists an idempotent $e$ such that $eBe=\CC e$. We may identify $eBe$ with $\CC$.
Denote the regular  representation of $B$ on $Be$ by $\pi$, which is faithful since $B$ is primitive.  
If we define $\ls x,y\rs=yx$ for $x\in Be,y\in eB$, then  $x\mapsto \ls x,v\rs$ is a linear functional on $Be$ for every $v\in eB$. Write $X=Be$ and $Y=\{f\in X^*:f=\ls-,v\rs\; \text{for some}\; v\in eB\}$.  If $\xi_1,\dots,\xi_n\in X$ are linearly independent, there exists $f\in Y$ such that $f(\xi_1)=1$ and $ f(\xi_i)=0$ for all $i>1$. 
All operators of the form  $\sum_{i=1}^n \xi_i\tnz f_i$ with $\xi_i\in X, f_i\in Y$ are contained in $\pi({\rm soc}(B))$.   We write $\{\xi\}^\bot=\{f\in Y: f(\xi)=0\}$ for $\xi\in X$.

We may and we shall identify $B$ with $\pi(B)$.
Every derivation $\widetilde{d}$ on $B$ is of the form $[\widetilde{a},-]$ for
some $\widetilde{a}\in B(X)$. The proof is the same as for $B(X)$. (One just defines
$\widetilde{a}:\xi\mapsto \widetilde{d}(\xi\tnz f)\eta$ for some $\eta\in X$, $f\in
Y$ with $f(\eta)=1$ and details can be easily verified.) 
Hence there exist $a,b\in B(X)$ such that  
$d=[a,-]$ and $g=[b,-]$. 
We take $\uu\in X$, $f\in \{\xi\}^\bot$ and calculate the spectra of $[a,\xi\tnz f]$ and $[b,\xi\tnz f]$.
These are operators of rank at most 2, and the only nonzero elements of their spectra appear as eigenvalues of their restrictions $\bar{a},\bar{b}$ to the vector spaces $\spn\{\xi,a\xi\}$ and $\spn\{\xi,b\xi\}$, respectively.
Suppose that $f( b\xi)\neq 0$. Then $-f(b\xi)$ is a nonzero eigenvalue of $\bar{b}$ (corresponding to the eigenvector $\xi$).
Since $\bar{b}$ has trace zero, its eigenvalues are $-f(b\xi)$ and $f(b\xi)$. By the hypothesis of the theorem,  $\bar{a}$ is nonsingular with nonzero eigenvalues $-f(a\xi)$ and $f(a\xi)$, and $\{-f(a\xi),f(a\xi)\}=\{-f(b\xi),f(b\xi)\}$. Hence $f(a\xi)=\pm f( b\xi)$.

Therefore, for all $\xi\in X,f\in \{\xi\}^\bot$ one of the following possibilities holds:  
$f((a-b)\xi)=0$ or $f((a+b)\xi)=0$ or $f(b\xi)=0$ . 
We shall have established the theorem  
if we prove that the same possibility holds for all $\xi\in X,f\in \{\xi\}^\bot$. 
Indeed, if $c\in B(X)$ has the property that $f(c \xi)=0$ for all $\xi\in X,f\in \{\xi\}^\bot$, 
then  for all $\xi\in X$, $\xi$ and $c\xi$ are linearly dependent, which easily implies $c\in \CC 1$.

We first fix $\xi$ and define 
\begin{align*}
X_\xi^0 & = \{f\in \{\xi\}^\bot:  f( b\xi)=0\},\;\\
X_\xi^- & = \{f\in \{\xi\}^\bot: f((a-b)\xi)=0\},\;\\
X_\xi^+ & =\{f\in \{\xi\}^\bot: f( (a+b)\xi)=0\}.
\end{align*}
From what has already been proved it follows that $\{ \xi\}^\bot=X_\xi^0\cup X_\xi^-\cup  X_\xi^+$. 
Since  $X_\xi^0$, $X_\xi^-$ and $X_\xi^+$ are vector spaces, we may conclude  that one of them equals $\{\xi\}^\bot$.

Let 
\begin{align*}
X^0 & =\{\xi\in X:  \{\xi\}^\bot=X_\xi^0\}=\{\xi\in X:b\xi\in \CC \xi\},\;\\ 
X^- & =\{\xi\in X:  \{\xi\}^\bot=X_\xi^-\}=\{\xi\in X:(a-b)\xi\in \CC \xi\}, \;\\
X^+ & =\{\xi\in X:  \{\xi\}^\bot=X_\xi^+\}=\{\xi\in X:(a+b)\xi\in \CC \xi\}.
\end{align*}
Then $X^0, X^-, X^+$ are closed sets with union $X$, therefore at least one of them, say $X^\delta$ with $\delta\in\{0,-,+\}$, has a nonempty interior by Baire's theorem.  
Let us prove that $X^\delta$ is a vector space. Since $X^\delta$ is closed under scalar multiplication, 
it suffices to show that $\xi+\xi'\in X^\delta$ for linearly independent $\xi,\xi'\in X^\delta$. 
As $X^\delta$ has a nonempty interior, there exists $\alpha\in \CC\bz\{0\}$ such that 
$\xi+\alpha \xi'\in X^\delta$. Then $(\delta a+b)(\xi+\alpha \xi')=\lambda_{\alpha}(\xi+\alpha \xi')$
for some $\lambda_{\alpha}\in\mathbb{C}$. As $\xi,\xi'\in X^\delta$, 
we have 
$(\delta a+b)(\xi+\alpha \xi')=(\delta a+b)\xi+(\delta a+b)\alpha \xi'=
\lambda_\xi \xi+\alpha\lambda_{\xi'}\xi'$ 
for some $\lambda_\xi,\lambda_{\xi'}\in \CC$. 
The linear independence of $\xi$, $\xi'$ now gives $\lambda_\xi=\lambda_{\alpha}=\lambda_{\xi'}$, 
which implies that $\xi+\xi'\in X^\delta$. Hence $X^\delta$ is a vector subspace of $X$ with 
nonempty interior and thus $X^\delta=X$, from which we easily deduce that $\delta a+b\in\CC 1$.
\end{proof}

The following lemma shows that in the present context we have a kind of a converse to the results from the preceding section.

\begin{lemma}\label{alg3}
Let $B$ be a  primitive Banach algebra with nonzero socle. If a derivation $d:B\to B$ satisfies $\s(d(x))=-\s(d(x))$ for all $x\in B$, then $d$ is implemented by an algebraic element of degree 2.
\end{lemma}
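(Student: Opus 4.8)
The plan is to use the structure recalled in the proof of Theorem \ref{rank1}. Identifying $B$ with $\pi(B)\subseteq B(X)$, where $X=Be$, the derivation is inner, $d=[a,-]$ for some $a\in B(X)$, and our task is to produce scalars making $a$ satisfy a quadratic. It suffices to prove the \emph{local} statement that $a^2\xi\in\spn\{\xi,a\xi\}$ for every $\xi\in X$; once this is known, an argument analogous to the closing paragraph of the proof of Theorem \ref{rank1} (covering $X$ by the closed sets on which the two quadratic coefficients are fixed and invoking Baire's theorem) upgrades it to a global relation $a^2=\gamma a+\beta 1$, i.e.\ $a$ is algebraic of degree $\le 2$, and of degree exactly $2$ as soon as $d\neq 0$.

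To prove the local statement I argue by contradiction: assuming $\xi,a\xi,a^2\xi$ are linearly independent for some $\xi$, I construct a finite-rank $x\in\soc(B)$ with $\s([a,x])$ not symmetric, contradicting the hypothesis. The main computational tool is a reduction of the nonzero spectrum of a commutator with a finite-rank operator to the spectrum of a finite matrix: writing $x=\sum_{i=1}^{k}\eta_i\tnz g_i$ with $\eta_i\in X$, $g_i\in Y$, and factoring $x=EF$ with $E=[\eta_1\,\cdots\,\eta_k]$ and $F=(g_1,\dots,g_k)$ stacked as rows, one has $[a,x]=PQ$ with $P=[\,aE\ \ E\,]$ and $Q=\left(\begin{smallmatrix}F\\ -Fa\end{smallmatrix}\right)$; hence the nonzero part of $\s([a,x])$ equals the nonzero part of the spectrum of
\begin{equation*}
QP=\begin{pmatrix} G_1 & G_0\\ -G_2 & -G_1\end{pmatrix},\qquad (G_m)_{ij}=g_i(a^m\eta_j).
\end{equation*}
The richness of $Y$ (it restricts to the full dual on any finite-dimensional subspace of $X$) lets me prescribe the entries of the $G_m$ subject only to the linear dependencies among the vectors $a^m\eta_j$. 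Since $\soc(B)$ consists of finite-rank operators, for $\lambda\neq0$ the inverse of $\lambda-[a,x]$ in $B(X)$ again lies in $B$, so the nonzero part of the spectrum computed in $B$ agrees with the one computed in $B(X)$, and the construction below is legitimate.

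The construction splits according to whether $a^3\xi\in W:=\spn\{\xi,a\xi,a^2\xi\}$. If $a^3\xi\notin W$, I take $k=2$, $\eta_1=\xi$, $\eta_2=a\xi$, and choose $g_1,g_2\in Y$ vanishing on a suitable part of $\{\xi,a\xi,a^2\xi,a^3\xi\}$ so that $QP$ has characteristic polynomial $t(t^3-1)$; then $\s([a,x])=\{0,1,\omega,\omega^2\}$ with $\omega=e^{2\pi i/3}$, which is not invariant under negation. If $a^3\xi\in W$, then $W$ is $a$-invariant and $a|_W$ is a cyclic (non-derogatory) operator on a $3$-dimensional space; taking $x\in\soc(B)$ with range contained in $W$ one checks that the nonzero part of $\s([a,x])$ coincides with $\s([a|_W,x|_W])$, so it suffices to find a $3\times3$ matrix $S$ with $[a|_W,S]$ invertible. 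A traceless invertible $3\times3$ matrix cannot have a spectrum symmetric about $0$, since three nonzero numbers summing to $0$ cannot be closed under negation; hence this again yields an asymmetric spectrum. Such an $S$ is provided by a cyclic permutation of eigenvectors when $a|_W$ has three distinct eigenvalues, and by an explicit matrix in each of the two remaining Jordan cases.

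I expect the main obstacle to be exactly this last point. Because $a$ need not possess any eigenvectors (it could behave like a unilateral shift, as in the Example above), the naive ``three distinct eigenvalues'' argument is unavailable, and the entire content is pushed into the finite-matrix reduction and into the $a$-invariant case, where one must verify by hand that every cyclic $3\times3$ matrix admits an invertible commutator. The remaining steps—the factorization identity for $QP$, the bookkeeping that realizes the prescribed matrices $G_m$ by elements of $Y$, and the Baire-type passage from the local to the global quadratic relation—are routine.
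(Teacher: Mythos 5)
Your overall strategy coincides with the paper's: pass to the contrapositive, produce $\xi$ with $\xi,a\xi,a^2\xi$ linearly independent, and build a finite-rank $x$ whose commutator with $a$ compresses to an invertible traceless operator on a $3$-dimensional space, so that its three nonzero eigenvalues sum to zero and the spectrum cannot be symmetric. Two remarks, one on a step that fails as written and one on an avoidable complication.

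The local-to-global step is the genuine gap. The sets $\{\xi\in X: a^2\xi=\gamma a\xi+\beta\xi\}$ are indeed closed subspaces, but they are indexed by $(\gamma,\beta)\in\CC^2$, so their union over an \emph{uncountable} family covering $X$ is not something Baire's theorem can see; the argument at the end of the proof of Theorem~\ref{rank1} works because there are only three sets. What you need here is the purely algebraic Kaplansky lemma (an operator that is locally algebraic of degree $\le 2$ is algebraic of degree $\le 2$), which is exactly what the paper invokes via \cite[Theorem 4.2.7]{Aup}. This is a standard fact, so the gap is repairable, but the Baire mechanism you describe does not repair it.

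The construction of $x$ is correct but the case split on whether $a^3\xi\in W=\spn\{\xi,a\xi,a^2\xi\}$ is unnecessary. With your $k=2$, $\eta_1=\xi$, $\eta_2=a\xi$, choose $f,g\in Y$ with $f(\xi)\neq 0$, $f(a\xi)=0$, $f(a^2\xi)\neq 0$ and $g(\xi)=g(a\xi)=0$, $g(a^2\xi)\neq 0$, and set $x=\xi\tnz f+a\xi\tnz g$. The range of $[a,x]$ lies in $W$ in either case, and the matrix of the compression to $W$ in the basis $(\xi,a\xi,a^2\xi)$ has trace $0$ and determinant $f(\xi)f(a^2\xi)g(a^2\xi)\neq 0$; the entries involving $a^3\xi$ simply never enter the determinant. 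This makes both of your harder claims superfluous: the realization of the characteristic polynomial $t(t^3-1)$ when $a^3\xi\notin W$ (which is true --- your eight parameters do realize every traceless $3\times 3$ compression in that case) and the verification that every cyclic $3\times 3$ matrix admits an invertible commutator (also true, via the trace-form description of the range of the inner derivation, but a detour). The $PQ$ versus $QP$ reduction and your remark that the nonzero spectrum of a socle element is the same in $B$ and in $B(X)$ are correct and are used implicitly in the paper as well.
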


\begin{proof}
We adopt the notation of the preceding theorem. Let $d=[a,-]$. 
Assume that $a$ is not algebraic of degree 2. 
By Kaplansky's theorem (see e.g. \cite[Theorem 4.2.7]{Aup}), we can find $\xi\in X$ such that $\xi,a\xi,a^2\xi$ are linearly independent. 
Take the element $\xi\tnz f+a\xi\tnz g$ 
where $f\in Y$ satisfies $f( \xi)\neq 0, f( a\xi)= 0, f(a^2\xi)\neq 0$ 
and $g\in Y$ satisfies $g( \xi)= 0, g( a\xi)= 0, g(a^2\xi)\neq 0$. 
Then the range of $ [a,\xi\tnz f+a\xi\tnz g]$ is contained in the vector subspace of $X$ spanned by the linearly independent vectors $\xi, a\xi, a^2\xi$. 
An easy computation shows that the restriction of $[a,\xi\tnz f+a\xi\tnz g]$ to this subspace is invertible. 
Hence it has three nonzero eigenvalues (counted by their multiplicity) whose sum is zero. 
Therefore $\s([a,\xi\tnz f+a\xi\tnz g])$ is not equal to $-\s([a,\xi\tnz f+a\xi\tnz g])$.
\end{proof}

\begin{corollary}
Let $a\in M_n(\CC)$.  Then there exists a nonscalar $b\in M_n(\CC)$ such that $b\not\in a+\CC 1$ and $\s([b,x])\subseteq \s([a,x])$ for every $x\in M_n(\CC)$ if and only if $a$ is algebraic of degree 2. In this case $b\in -a+\CC 1$.
\end{corollary}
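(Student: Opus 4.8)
The plan is to obtain the corollary as a direct specialization of the three preceding results, so the work consists mainly in verifying their hypotheses and translating the conclusions back into statements about $a$ and $b$. First I would record the structural facts about $B=M_n(\CC)$ that make the machinery applicable: $B$ is simple, hence primitive, its socle is all of $B$, and since $B$ is finite-dimensional the implication $xy=1\Rightarrow yx=1$ holds. Thus Theorem~\ref{rank1}, Lemma~\ref{alg2} and Lemma~\ref{alg3} are all available. I would also note at the outset the dictionary $[b,-]=\lambda[a,-]$ if and only if $b-\lambda a\in Z(B)=\CC 1$, that is, if and only if $b\in\lambda a+\CC 1$; this converts the conclusion $g=\lambda d$ of Theorem~\ref{rank1} into membership statements for $b$.

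For the necessity direction, suppose a nonscalar $b\notin a+\CC 1$ satisfies $\s([b,x])\subseteq\s([a,x])$ for all $x$. Applying Theorem~\ref{rank1} with $d=[a,-]$ and $g=[b,-]$ (the inclusion certainly implies the weaker one modulo $\{0\}$, and $\soc(B)=B$) gives $b\in\lambda a+\CC 1$ with $\lambda\in\{-1,0,1\}$. The hypothesis that $b$ is nonscalar excludes $\lambda=0$ and the hypothesis $b\notin a+\CC 1$ excludes $\lambda=1$, so $\lambda=-1$ and $b\in -a+\CC 1$; this already yields the final assertion of the corollary. Substituting $[b,x]=-[a,x]$ into the spectral inclusion turns it into $-\s([a,x])\subseteq\s([a,x])$ for every $x$. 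Here I would invoke the one small argument of the proof: the spectrum of a matrix is finite, so negation is an injection of the finite set $\s([a,x])$ into itself and hence a bijection, upgrading the inclusion to the equality $\s([a,x])=-\s([a,x])$. Lemma~\ref{alg3} then forces $a$ to be algebraic of degree $2$.

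For the sufficiency direction, assume $a$ is algebraic of degree $2$. Then $a$ is nonscalar, since its minimal polynomial has degree $2$, and by Lemma~\ref{alg2} we have $\s([a,x])=-\s([a,x])$ for all $x$. Taking $b=-a$, the element $b$ is nonscalar and lies outside $a+\CC 1$ (otherwise $2a\in\CC 1$, contradicting nonscalarity), while $\s([b,x])=-\s([a,x])=\s([a,x])$, so the required inclusion holds with equality. This produces the desired $b$ and completes the equivalence.

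I do not expect a genuine obstacle here: the corollary carries no new content beyond Theorem~\ref{rank1} and Lemmas~\ref{alg2}--\ref{alg3}, and the only step that is not pure bookkeeping is the finiteness argument converting $-\s([a,x])\subseteq\s([a,x])$ into an equality, which is immediate in $M_n(\CC)$.
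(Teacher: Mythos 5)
Your proposal is correct and follows exactly the route of the paper, whose entire proof is the single line ``Apply Lemma \ref{alg2}, Theorem \ref{rank1}, and Lemma \ref{alg3}''; you have simply supplied the routine verifications (socle of $M_n(\CC)$ is everything, $xy=1\Rightarrow yx=1$ in finite dimensions, elimination of $\lambda=0,1$, and the finiteness argument upgrading $-\s([a,x])\subseteq\s([a,x])$ to equality) that the authors left implicit.
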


\begin{proof}
Apply Lemma \ref{alg2}, Theorem \ref{rank1}, and Lemma \ref{alg3}.
\end{proof}

We will need Theorem \ref{rank1} in Section 4. Its first application, however, concerns 
derivations with the property that their
values have a finite spectrum. Such derivations have been studied in a series of papers, started in \cite{BrSe0} and ended in \cite{BoSe}.

\begin{theorem}\label{majhen}
Let $d,g$ be derivations of a semisimple Banach algebra $B$. Suppose $\s(d(x))$ is finite for every $x\in B$. If $\s(g(x))\subseteq \s(d(x))$ for every $x\in B$, then there exist derivations $d_0,d_1,d_2$ of $B$  such that  $d=d_0+d_1+d_2$, $g=d_1-d_2$, and $d_i(B)Bd_j(B)=0$ for $i\neq j$.
\end{theorem}

\begin{proof}
First we give an extraction from the aforementioned papers. It consists of the main results together with some details  that are apparently not explicitly formulated in any of the papers from the series,   but are evident from the proofs.

By \cite[Theorem 2.4]{BoMa} there exist $a,b\in\mathrm{soc}( B)$ such that $d=[a,-]$ and $g=[b,-]$.
Accordingly, each of $d(B)$ and $g(B)$ is contained in all  but finitely many primitive ideals of $B$ \cite[Proposition 2.2]{BrSe0}. 
If $d=0$, then there are no such primitive ideals for $d(B)$.  However, in this case $g$ has only quasinilpotent values and hence it is $0$ (see, e.g., \cite{TS}). We may therefore assume that $d\ne 0$. On the other hand, $g\ne 0$ can be assumed without loss of generality. 
Let $P_1,\ldots,P_m$ be the only primitive ideals such that $d(B)\not\subseteq P_i$, $i=1,\ldots,m$, and similarly, 
let $Q_1,\ldots,Q_n$ be the only primitive ideals such that $g(B)\not\subseteq Q_j$, $j=1,\ldots,n$. As noticed in the proof of \cite[Theorem 2.1]{BoSe}, we have $P_i\not\subseteq P_{i'}$ for all $i\ne i'$ and  $Q_j\not\subseteq Q_{j'}$ for all $j\ne j'$. 
  Therefore the proof of 
 \cite[Theorem 2.4]{BoMa} shows that there exist $a_1,\ldots,a_m\in B$ and $b_1,\ldots,b_n\in B$ such that
 \begin{itemize}
\item $a= a_1+\ldots+a_m$ and $b=b_1+\ldots+b_n$.
\item  $d(x)+P_i = [a_i,x]+ P_i$ and  $g(x)+Q_j = [b_j,x]+ Q_j$  for all $x\in B$.
\item $a_i+P_i\in \soc(B/P_i)$ and $b_j+Q_j\in \soc(B/Q_j)$. 
\item $a_i\in \bigcap_{P\neq P_i}P$  and $b_j\in \bigcap_{P\neq Q_j}P$.
\end{itemize}
(Here, the intersection runs over primitive ideals $P$ of $B$.) Note that each $a_i\ne 0$ and  each $b_j\ne 0$. Therefore $\bigcap_{P\neq P_i}P$ and
$\bigcap_{P\neq Q_j}P$ are nonzero ideals. Since $\Bigl(\bigcap_{P\neq P_i}P\Bigr) \bigcap P_i=0$ and $\Bigl(\bigcap_{P\neq Q_j}P\Bigr) \bigcap Q_j=0$ by the semisimplicity of $B$, it follows that
$\bigcap_{P\neq P_i}P\not\subseteq P_i$ and $\bigcap_{P\neq Q_j}P\not\subseteq Q_j$.

We claim that  
$\{Q_1,\ldots,Q_n\}\subseteq \{P_1,\ldots,P_m\}$. Suppose that, say, $Q_1$ is none of the ideals $P_i$. Thus, $d(B)\subseteq Q_1$ and $g(B)\not\subseteq Q_1$.
 Set $I = \bigcap_{P\neq Q_1} P$ and take  $x\in I$. In particular, $x$ is contained in every $P_i$, hence $d(x)$ is contained in every $P_i$, and so $d(x)$ is actually contained in every primitive ideal of $B$. Therefore $d(x)=0$, and, consequently,
$\sigma(g(x))=\{0\}$. Thus, the restriction of $g=[b,-]$ to   $I$ is a continuous derivation of $I$ with quasinilpotent values. As $I$ is a closed ideal of a semisimple Banach algebra, it follows that  $g(I)=0$ \cite{TS}. Accordingly,  for all $x\in B$ and $u\in I$ we have $g(x)u = g(xu) - xg(u)=0\in Q_1$. Since $Q_1$ is, in particular, a prime ideal, and since $g(x)\notin Q_1$ for some $x\in B$, we must have $I\subseteq Q_1$. However, at the end of the preceding paragraph we have shown that this is not true. 
Our claim is thus proved.  Therefore $n\le m$ and we may assume that 
 $$
 Q_1=P_1,\,Q_2=P_2,\ldots,\, Q_n = P_n.
 $$
 
 Recall that $\s(y)=\bigcup_{P}\s(y+P)$ for every $y\in B$ \cite[Theorem 2.2.9]{Ric}.
 Let $i\le n$, pick $x\in \bigcap_{P\neq P_i}P$, and take $g(x)$ for $y$. Then we obtain
   $\s(g(x))\cup\{0\}
  =\s(g(x)+P_i)\cup\{0\}=\s([b_i,x]+P_i)\cup\{0\}$. Similarly, 
  $\s(d(x))\cup\{0\}=\s([a_i,x]+P_i)\cup\{0\}.$ Using the assumption  of the theorem we thus have
  $$
  \s([b_i,x]+P_i)\subseteq \s([b_i,x]+P_i)\cup\{0\}\subseteq \s([a_i,x]+P_i)\cup\{0\}
  $$
for all $x\in   \bigcap_{P\neq P_i}P$. 
  Since $\bigcap_{P\neq P_i}P$ is an ideal which is not contained in $P_i$,  $\bigl(\bigcap_{P\neq P_i}P+P_i\bigr)/P_i$ is a nonzero ideal of $B/P_i$. 
  It is well-known that the socle of a primitive algebra  is contained in every other nonzero ideal. Therefore 
    $\s([b_i+P_i,y])\subseteq \s([a_i+P_i,y])\cup\{0\}$
  holds for all $y\in {\rm soc}(B/P_i)$.  
This enables us  
to apply Theorem \ref{rank1}. Hence we conclude that $b_i+P_i=\lambda_ia_i+\mu_i 1+P_i$ for some $\lambda_i\in\{-1,1\}$ and $\mu_i\in \CC$. Note that the case $\lambda_i=0$ can not occur for $g(B)\not\subseteq P_i$.  Now define $\tilde{a}_1$ as the sum of all $a_i$ such that $i\le n$ and $\lambda_i =1$,  $\tilde{a}_2$ as the sum of all $a_i$ such that $i\le n$ and $\lambda_i =-1$,
and  $\tilde{a}_0$ as the sum of all $a_i$ such that $i> n$ (the sum over the empty set of indices should be read as $0$). 
Setting   $d_0=[\tilde{a}_0,-]$, $d_1=[\tilde{a}_1,-]$, and $d_2=[\tilde{a}_2,-]$ we have $d=d_0+d_1+d_2$ and $g=d_1-d_2$.
Note that for any pair of different indices $k$ and $l$ we have $[a_k,B]B[a_l,B]\subseteq  \bigr(\bigcap_{P\neq P_k}P\bigl)\bigcap \bigr(\bigcap_{P\neq P_l}P\bigl) =0$.
This clearly implies that $d_i(B)Bd_j(B)=0$ if $i\neq j$.
\end{proof}

So far we have relied heavily on finite rank operators. In general Banach algebras the spectrum of a value of a derivation may not be so easily tractable. The next lemma reduces the treatment of the spectral inclusion condition \eqref{ena} to another problem which may be of independent interest. It will play a fundamental role in the next section and in the first result of the last section.

\begin{lemma}\label{Bo}
Let $B$ be a semisimple Banach algebra and $a,b\in B$. If $\s([b,x])\subseteq \s([a,x])\cup\{0\}$ for all $x\in B$, then for all $y,z \in B$,  $yz=yaz=0$ implies $ybz=0$.
\end{lemma}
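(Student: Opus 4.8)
The plan is to exhibit, for each $c\in B$, a test element $x\in B$ whose value $[a,x]$ is visibly quasinilpotent, so that the hypothesis forces $[b,x]$ to be quasinilpotent as well, and then to convert this analytic information about $b$ back into the purely algebraic conclusion $ybz=0$ by using semisimplicity. The natural candidate is $x=zcy$, chosen precisely so that the two assumptions $yz=0$ and $yaz=0$ collapse the relevant products.

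First I would write $[a,zcy]=U-V$ with $U=azcy$ and $V=zcya$, and check that $[a,zcy]^3=0$. Indeed, $yaz=0$ makes both $U^2=azc(yaz)cy$ and $V^2=zc(yaz)cya$ vanish, while $yz=0$ makes $UV=azc(yz)cya$ vanish; hence $(U-V)^2=-VU$ and $(U-V)^3=0$. Thus $\s([a,zcy])=\{0\}$, and the hypothesis $\s([b,x])\subseteq\s([a,x])\cup\{0\}$ yields that $q_c:=[b,zcy]=bzcy-zcyb$ is quasinilpotent, i.e. $\rho(q_c)=0$, for every $c\in B$.

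The second and more delicate step is to squeeze the element $ybz$ out of this. Writing $u=bzcy$ and $v=zcyb$, the relation $yz=0$ gives $uv=bzc(yz)cyb=0$, and I would exploit the elementary identity $u(u-v)=u^2-uv=u^2$, which iterates to $u(u-v)^n=u^{n+1}$, that is, $u^{n+1}=u\,q_c^{\,n}$. Taking spectral radii, $\rho(u)^{n+1}=\rho(u^{n+1})\le\|u\|\,\|q_c^{\,n}\|$, and letting $n\to\infty$ forces $\rho(u)\le\rho(q_c)=0$, so $\rho(bzcy)=0$. By cyclic invariance of the spectral radius, $\rho\bigl((ybz)c\bigr)=\rho(ybzc)=\rho(bzcy)=0$ for every $c\in B$, whence $ybz$ lies in the radical of $B$; semisimplicity then gives $ybz=0$.

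The main obstacle is exactly this middle transition: quasinilpotency of the commutator $q_c$ does not by itself control the spectral radius of the individual products $bzcy$ or $ybz$. The decisive point is that $uv=0$ turns $q_c=u-v$ into a right factor of every power of $u$, via $u^{n+1}=u\,q_c^{\,n}$, which is precisely what lets $\rho(q_c)=0$ propagate to $u$ and hence to $ybz$. Identifying the correct test element $x=zcy$ — the one that simultaneously makes $[a,x]$ nilpotent and the product $uv$ zero — is the conceptual heart of the argument.
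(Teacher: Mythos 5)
Your proof is correct, and its first half --- computing $[a,zcy]^3=0$ from $yz=yaz=0$ and concluding that $q_c=[b,zcy]$ is quasinilpotent --- coincides with the paper's opening move. Where you genuinely diverge is in extracting $ybz=0$ from this. The paper argues by contradiction through representation theory: assuming $ybz\ne 0$, it takes an irreducible representation $\pi$ with $\pi(ybz)\xi\ne 0$, uses irreducibility to find $u$ with $\pi(u)\pi(ybz)\xi=\xi$, and checks that $\pi([b,zuy])\eta=-\eta$ for $\eta=\pi(z)\xi$, so that $-1\in\s([b,zuy])$, contradicting quasinilpotence. You instead stay entirely at the level of spectral radii: from $uv=0$ for $u=bzcy$, $v=zcyb$ you get $u^{n+1}=u\,q_c^{\,n}$, hence $\rho(bzcy)\le\rho(q_c)=0$, and then $\rho\bigl((ybz)c\bigr)=\rho(bzcy)=0$ for all $c$ puts $ybz$ in the radical. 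Both routes use semisimplicity at the very end --- the paper through the existence of an irreducible representation not annihilating $ybz$, you through the characterization $\rad(B)=\{w\in B:\rho(wc)=0\ \text{for all}\ c\in B\}$ (valid since $\rho(wc)=\rho(cw)$). Your version avoids representation theory altogether, and the identity $u^{n+1}=u\,q_c^{\,n}$ is a clean, self-contained device for propagating quasinilpotence from the commutator to the one-sided product; the paper's version is a bit shorter once one is willing to quote strict transitivity of irreducible representations. All the individual steps you use (spectral mapping for powers, $\s(pq)\cup\{0\}=\s(qp)\cup\{0\}$, the radical characterization) are standard, so there is no gap.
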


\begin{proof}
From the assumptions $yz=0$ and $yaz=0$ we find by a short calculation that $[a,zxy]^3=0$ for all 
$x\in B$. Consequently, by the hypothesis of the lemma,  $\s([b,zxy])=\{0\}$ for all $x\in B$. Assume that $ybz\ne0$ and, to 
obtain a contradiction, take an irreducible representation $\pi$ of $B$ on a Banach space $X$ such that $\pi(ybz)\neq 0$.  Choose $\xi\in X$ with $\pi(ybz)\xi\neq 0$.  By irreducibility there exists $u\in B$ such that $\pi(u)\pi(ybz)\xi=\xi$. Then $\pi([b,zuy])\eta=-\eta$, where $\eta=\pi(z)\xi$. Hence  we have $-1\in \s([b,zuy])$, a contradiction. 
\end{proof}

\section{Results on von Neumann algebras}

In this section we consider the spectral inclusion condition \eqref{ena} in von Neumann algebras. As derivations are
automatically inner on these algebras (see, e.g., \cite[Exercise 8.7.55]{KR}), we assume, throughout, that  $d=[a,-]$ and $g
 =[b,-]$.

For factors the desired conclusion, which is the same as for primitive Banach
algebras with nonzero socle, follows easily from Lemma \ref{Bo} and the result on
reflexivity from \cite{Mag}. But first we need a technical lemma.

\begin{lemma}\label{-1,1}
Let $B$ be a von Neumann algebra and $I$  be a closed ideal in $B$. If an element $a\in B$ with $a+I\not \in Z( B/I)$ satisfies $\lambda\s([a,x]+I)\subseteq \s([a,x]+I)\cup\{0\}$ for all $x\in B$ and 
for some $\lambda\in \CC$, then $\lambda\in \{-1,0,1\}$.
\end{lemma}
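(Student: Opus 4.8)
The plan is to pass to the quotient $C^*$-algebra $\bar B=B/I$ and write $\bar a=a+I$, so that the hypothesis becomes $\lambda\,\s([\bar a,\bar x])\subseteq\s([\bar a,\bar x])\cup\{0\}$ for every $\bar x\in\bar B$, the spectra now being computed in $\bar B$, where $\bar a\notin Z(\bar B)$. If $\lambda=0$ there is nothing to prove, so assume $\lambda\ne0$. The whole argument then rests on exhibiting a single $\bar x$ for which $c:=[\bar a,\bar x]$ has a three-point symmetric spectrum $\s(c)\subseteq\{0,\nu,-\nu\}$ with $\nu\ne0$ and $\nu\in\s(c)$: for such a $c$ the inclusion immediately yields $\lambda\nu\in\lambda\,\s(c)\subseteq\s(c)\cup\{0\}\subseteq\{0,\nu,-\nu\}$, whence $\lambda\in\{-1,0,1\}$.

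To produce such a $c$ I would look for $\bar x\in\bar B$ satisfying the algebraic relations $\bar x^2=0$, $\bar x\bar a\bar x=\nu\bar x$ with $\nu\ne0$, and $\bar x\bar a^2\bar x=\tau\bar x$ for some scalar $\tau$. A direct computation from these three relations shows that $c=[\bar a,\bar x]$ satisfies $c^3=\nu^2c$ (the parameter $\tau$ cancels). Since the minimal polynomial of $c$ then divides $t(t-\nu)(t+\nu)$, we get $\s(c)\subseteq\{0,\nu,-\nu\}$; and from $c^{2k+1}=\nu^{2k}c$ we get $\rho(c)=|\nu|$, so $\s(c)$ meets the circle $|z|=|\nu|$, and after replacing $\nu$ by $-\nu$ if necessary we have $\nu\in\s(c)$. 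Moreover $c\ne0$, since $[\bar a,\bar x]=0$ would give $\bar x\bar a\bar x=\bar a\bar x^2=0$, forcing $\nu=0$. Thus such an $\bar x$ delivers exactly the element $c$ required in the first paragraph.

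The remaining, and genuinely hard, point is the existence of $\bar x$ with these relations. The natural source is an irreducible representation: since $\bar a\notin Z(\bar B)$ there is an irreducible representation $\pi$ of $\bar B$ on a Hilbert space $H$ with $A:=\pi(\bar a)$ not a scalar, so one may pick $\xi\in H$ with $\xi,A\xi$ linearly independent and a functional $f=\ls\,\cdot\,,\zeta\rs$ with $f(\xi)=0$ and $\nu:=f(A\xi)\ne0$; the rank-one operator $\xi\tnz f$ then satisfies $(\xi\tnz f)^2=0$, $(\xi\tnz f)A(\xi\tnz f)=\nu\,\xi\tnz f$, and $(\xi\tnz f)A^2(\xi\tnz f)=f(A^2\xi)\,\xi\tnz f$, which are precisely the three relations in $B(H)$. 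The obstacle is that these identities must hold in $\bar B$ itself, not merely in the generally non-faithful image $\pi(\bar B)$, because the conclusion $\s_{\bar B}(c)\subseteq\{0,\nu,-\nu\}$ depends on $c^3=\nu^2c$ holding in $\bar B$ (passing to $\pi$ only shrinks the spectrum, which destroys the inclusion). I would resolve this by locating $\bar x$ inside the socle of a suitable primitive quotient of $\bar B$: reducing, as in the proof of Theorem \ref{majhen}, to elements supported on a single primitive block $\bar B/P_0$, where the identity $\s_{\bar B}(y)=\s(y+P_0)\cup\{0\}$ holds for $y\in\bigcap_{P\ne P_0}P$ and genuine rank-one elements are available, and then invoking Theorem \ref{rank1}. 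Verifying that such a socle-carrying quotient exists on which $\bar a$ remains non-central is where the finer structure of $B$ must enter, and I expect this to be the main difficulty of the lemma.
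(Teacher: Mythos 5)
Your reduction is correct as far as it goes: the computation showing that $\bar x^2=0$, $\bar x\bar a\bar x=\nu\bar x$, $\bar x\bar a^2\bar x=\tau\bar x$ force $c^3=\nu^2c$ checks out, and from there the conclusion $\lambda\in\{-1,0,1\}$ does follow. But the proof is not complete, because you never actually produce such an $\bar x$, and the route you sketch for doing so does not work in the generality of the lemma. You propose to find $\bar x$ as a rank-one-type element in the socle of a primitive quotient of $B/I$. For a general von Neumann algebra $B$ and closed ideal $I$ there is no reason for any primitive quotient of $B/I$ to have nonzero socle: take $B$ a type $\mathrm{II}_1$ factor with $I=0$, or $B=B(H)$ with $I=K(H)$ (the Calkin algebra is simple with no minimal idempotents). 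In these cases the ``socle-carrying quotient'' you hope for simply does not exist, so the main difficulty you correctly identify at the end of your argument is not a technicality but a dead end for this approach.

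The paper avoids rank-one elements entirely and instead exploits the von Neumann structure of $B$ itself (not of $B/I$). Since $a+I\notin Z(B/I)$ one finds a projection $p\in B$ with $a_3:=(1-p)ap\notin I$, and then, writing everything as $2\times2$ matrices over $pH\oplus(1-p)H$ and choosing $x=pxq$ with $(1,2)$-entry $x_2=a_3^*g(a_3a_3^*)$ for a suitable Borel function $g$ (so that $x_2a_3$ and $a_3x_2$ become spectral projections $q_1=\chi(a_3^*a_3)$, $q_2=\chi(a_3a_3^*)$ not lying in $I$), one gets $[a,x]$ block upper-triangular with diagonal $-q_1$, $q_2$. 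This yields $\{-1,1\}\subseteq\s([a,x]+I)\subseteq\{-1,0,1\}$ directly, and the lemma follows at once. Note that this $x$ does satisfy $x^2=0$ but not your relation $xax=\nu x$; the functional calculus produces projections rather than a scalar eigenvalue relation, which is exactly what makes the construction available in type $\mathrm{II}$ and $\mathrm{III}$ algebras. If you want to rescue your scheme, you would need to replace the scalar relations by the operator relations the paper uses, at which point you have essentially reconstructed its proof.
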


\begin{proof}
Since $a+I$ is not central in $B/I$, we can find a projection $p\in B$ such that $(1-p)ap\not \in I$. 
Let $B$ act on a Hilbert space $H=pH+(1-p)H$.
According to this decomposition we can represent every $a\in B$ as a $2\times 2$ matrix 
$$\left( \begin{array}{cc}
a_1 & a_2\\
a_3 & a_4
\end{array} \right),$$
where $a_1=pap$, $a_2=pa(1-p)$ and so on. If we choose $x$ in $pB(1-p)$ 
so that $x$ is represented by the matrix which has an element $x_2$ on the position $(1,2)$ and zeros 
elsewhere,
then a short computation shows that
$$
[a,x]=\left( \begin{array}{cc}
-x_2a_3 & a_1x_2-x_2a_4\\
0  & a_3x_2
\end{array} \right).$$
Since $a_3=(1-p)ap\notin I$, it follows that $a_3a_3^*\notin I$ and
there exists a closed subset in $\s(a_3a_3^*)$ that does not contain 0 such that its characteristic function
$\chi$ satisfies $\chi(a_3a_3^*)
\not\in I$.  
Take a function $g$ that satisfies $\chi(t)=g(t)t$ for every $t\in \s(a_3a_3^*)$ and $g(0)=0$. 
If  we choose $x_2=a_3^*g(a_3a_3^*)$, then $[a,x]$ is of the form
$$[a,x]=\left(\begin{array}{cc}
-q_1&y\\
0&q_2\end{array}\right),$$
where $q_1=\chi(a_3^*a_3)$ and $q_2=\chi(a_3a_3^*)$.
(We have used the well-known fact that $g(a_3a_3^*)a_3=a_3g(a_3^*a_3)$ which follows by approximating
$g$ by polynomials.) Since $q_1$ and $q_2$ are projections and not contained in $I$, we now see that 
 $\{-1,1\} \subseteq  \s([a,x]+I)\subseteq \{-1,0,1\}$, from which the lemma is evident. 
\end{proof}

\begin{proposition}
Let $B$ be a factor and let $a,b\in B$  satisfy $\s([b,x])\subseteq \s([a,x])\cup\{0\}$ for every $x\in B$. 
Then $b=\lambda a+\mu$ for some $\lambda\in \{-1,0,1\}$ and some $\mu\in \CC$.
\end{proposition}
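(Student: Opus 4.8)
The plan is to combine Lemma~\ref{Bo} with the reflexivity result of \cite{Mag} to trap $b$ inside $\spn\{1,a\}$, and then to use Lemma~\ref{-1,1} to identify the admissible scalars. First I would invoke Lemma~\ref{Bo}: a factor is in particular a $C^*$-algebra, hence semisimple, so the hypothesis $\s([b,x])\subseteq\s([a,x])\cup\{0\}$ gives that for all $y,z\in B$ the equalities $yz=0$ and $yaz=0$ together force $ybz=0$. Setting $\U=\spn\{1,a\}$, the two relations $yz=0$ and $yaz=0$ are exactly $y\U z=0$, so this says precisely that $b$ lies in the two-sided reflexive hull $\{c\in B:\ y\U z=0\Rightarrow ycz=0\}$ of $\U$.

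The central step is to apply the reflexivity theorem from \cite{Mag}, which ensures that in a factor a subspace of the shape $\spn\{1,a\}$ coincides with its reflexive hull (i.e.\ is reflexive in the relevant bimodule sense). Hence $b$ lies in the weak-$*$ closure of $\spn\{1,a\}$; being finite dimensional this span is already weak-$*$ closed, so $b=\lambda a+\mu 1$ for some $\lambda,\mu\in\CC$. I expect this to be the main obstacle, in that the genuine content is imported from \cite{Mag}: the work here is to verify that the condition produced by Lemma~\ref{Bo} matches precisely the hypothesis of the reflexivity result, so that nothing is lost in the translation.

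It remains to restrict $\lambda$. Substituting $b=\lambda a+\mu 1$ yields $[b,x]=\lambda[a,x]$, whence $\s([b,x])=\lambda\s([a,x])$ and the hypothesis becomes $\lambda\s([a,x])\subseteq\s([a,x])\cup\{0\}$ for every $x\in B$. If $a\in\CC 1$, then $a$ is central, $b=\lambda a+\mu 1$ is a scalar, and we may simply take $\lambda=0$. Otherwise $a\notin Z(B)=\CC 1$, so Lemma~\ref{-1,1} with $I=0$ applies and forces $\lambda\in\{-1,0,1\}$, completing the proof.
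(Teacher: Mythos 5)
Your strategy coincides with the paper's for most factors, and two of your three steps are sound: the translation of Lemma~\ref{Bo} into the statement that $b$ lies in the reflexive hull of $\spn\{1,a\}$ is exactly right, and the final reduction via Lemma~\ref{-1,1} with $I=0$ (with the degenerate case $a\in\CC 1$ disposed of by taking $\lambda=0$) is also correct. The gap is in the central step: \cite[Theorem 1.1]{Mag} asserts reflexivity of finite-dimensional subspaces only for factors \emph{different from} $B(H)$, and this restriction cannot be dropped. For example, in the factor $M_2(\CC)$ take $a=e_{12}$. The only nonzero pairs $(y,z)$ with $yz=0$ and $yaz=0$ are those with $\mathrm{im}\,z\subseteq\CC e_1$ and $\ker y\supseteq\CC e_1$, and every upper triangular $c$ satisfies $ycz=0$ for all such pairs; hence the reflexive hull of $\spn\{1,e_{12}\}$ is the three-dimensional algebra of upper triangular matrices, strictly larger than $\spn\{1,e_{12}\}$. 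So for square-zero (more generally, degree-two algebraic) elements of a type I factor your argument does not trap $b$ inside $\spn\{1,a\}$ --- and this is precisely the situation where the alternative $b=-a+\mu$ genuinely occurs (Lemma~\ref{alg2}), so the failure is not an artifact.

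The paper closes this case by treating $B=B(H)$ separately: $B(H)$ is a primitive Banach algebra with nonzero socle, so Theorem~\ref{rank1} applies and yields $[b,x]=\lambda[a,x]$ for all $x$ with $\lambda\in\{-1,0,1\}$, whence $b-\lambda a\in\CC 1$. Inserting this case distinction repairs your proof. Two minor further remarks: reflexivity gives $b\in\spn\{1,a\}$ directly, so the detour through a weak-$*$ closure is unnecessary (and conflates two different notions of reflexivity); and when you do apply Lemma~\ref{-1,1}, note that in a factor $Z(B)=\CC 1$, so the hypothesis $a\notin Z(B)$ is exactly the non-scalar case you isolated.
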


\begin{proof}
The case $B=B(H)$ has already been handled by
 Theorem \ref{rank1}.
Assume that $B\neq B(H)$ and $b\not \in  \spn\{1,a\}$. 
According to \cite[Theorem 1.1]{Mag}, every finite dimensional subspace, in particular  $\spn\{1,a\}$, in a factor different from $B(H)$ is reflexive. 
This means that there exist $y,z\in B$ such that $yz=0$ and $yaz=0$ but $ybz\neq 0$, contradicting Lemma  \ref{Bo}.
 Therefore $b\in \spn \{1,a\}$. 
Now Lemma \ref{-1,1} with $I=0$ yields the desired conclusion. 
\end{proof}

We now proceed to general von Neumann algebras. Let us introduce some notation and list some standard  results that will be needed in the sequel.

Denote by $X$ the character space of $Z(B)$. 
Let $Bt$ be the closed ideal in $B$ generated by $t\in X$. 
We write $B(t)$ for the quotient algebra $B/Bt$ and $x(t)$ for the coset $x+Bt\in B/Bt$. 
The function $t\mapsto \|x(t)\|$ is continuous for every $x\in B$  and the map  $x\mapsto (x(t))_{t\in X}$ from $B$ to $\Pi_{t\in X} B(t)$ is injective and hence an isometric embedding, in particular $\|x\|=\sup_{t\in X}\|x(t)\|$ (see  \cite{Glimm} for proofs). By \cite[Theorem 4.7]{Halp}, $B(t)$ is primitive for every $t\in X$.

The spectrum of elements relative to some subalgebra $A$ of $B$ will be denoted  by $\s_A(\,.\,)$.

\begin{lemma}\label{kvoc}
Let $B$ be a von Neumann algebra, $c$ an element in $B$, and let $t$ be an element in the character space $X$ of $Z(B)$. 
If $\emph{P}_t$ is the set of all projections that correspond to the characteristic functions of those clopen sets that contain $t$,
then $\s(c(t))=\bigcap_{p\in \emph{P}_t}\s_{pB}(pc)$.
\end{lemma}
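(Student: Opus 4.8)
The plan is to prove the two inclusions separately, reducing to the case $\lambda=0$ by replacing $c$ with $c-\lambda 1$. The structural observation used throughout is this: if $U$ is a clopen set containing $t$ and $p=\chi_U$ is the corresponding central projection, then $1-p$ vanishes at $t$ and hence lies in $\ker(t)\subseteq Bt$; since $Bt$ is an ideal, $(1-p)y\in Bt$ for every $y\in B$, so that $p(t)=1_{B(t)}$ and $(py)(t)=y(t)$ in $B(t)$. Thus passing to the quotient $B(t)$ does not see the complement of any clopen neighborhood of $t$.

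For the inclusion $\s(c(t))\subseteq\bigcap_{p\in\mathrm{P}_t}\s_{pB}(pc)$, I would fix $p\in\mathrm{P}_t$ and restrict the quotient map $B\to B(t)$ to the corner $pB$. By the observation above this restriction is a unital surjective algebra homomorphism $pB\to B(t)$ sending $pc$ to $c(t)$. Since a unital homomorphism carries invertible elements to invertible elements, spectra can only shrink under it, giving $\s(c(t))\subseteq\s_{pB}(pc)$; intersecting over all $p\in\mathrm{P}_t$ yields the inclusion. This half is a soft consequence of functoriality of the spectrum under unital surjections.

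The reverse inclusion is the substantive part. Assuming $0\notin\s(c(t))$, so that $c(t)$ is invertible in $B(t)$, I would pick $d\in B$ with $d(t)c(t)=c(t)d(t)=1_{B(t)}$, whence $\|(dc-1)(t)\|=\|(cd-1)(t)\|=0$. Using the continuity of the functions $t'\mapsto\|(dc-1)(t')\|$ and $t'\mapsto\|(cd-1)(t')\|$, I would find an open neighborhood of $t$ on which both are $<1$; since $Z(B)$ is a commutative von Neumann algebra, $X$ is Stonean and its clopen sets form a basis of the topology, so this neighborhood contains a clopen $U\ni t$. Setting $p=\chi_U\in\mathrm{P}_t$, centrality of $p$ gives $(pd)(pc)-p=p(dc-1)$, and the identity $\|x\|=\sup_{t'}\|x(t')\|$ together with $p(t')=0$ for $t'\notin U$ and $p(t')=1_{B(t')}$ for $t'\in U$ yields $\|(pd)(pc)-p\|=\sup_{t'\in U}\|(dc-1)(t')\|<1$. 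Hence $(pd)(pc)$ lies within distance $1$ of the unit $p$ of $pB$ and is invertible there; symmetrically $(pc)(pd)$ is invertible. From the invertibility of both products one deduces that $pc$ is invertible in $pB$, so $0\notin\s_{pB}(pc)$ for this particular $p$, and therefore $0\notin\bigcap_{p\in\mathrm{P}_t}\s_{pB}(pc)$.

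I expect the main obstacle to be precisely this localization step in the reverse inclusion: passing from the pointwise statement that $c(t)$ is invertible at the single point $t$ to genuine invertibility of $pc$ over an entire clopen neighborhood. This requires combining the approximate-inverse estimate with the supremum formula for the norm and the availability of a clopen neighborhood of $t$, the last being exactly where the Stonean nature of $X$ enters. The need to control both $(dc-1)(t)$ and $(cd-1)(t)$ is what forces the two-sided argument, since left invertibility alone does not give invertibility in a general $C^*$-algebra.
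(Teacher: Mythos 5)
Your proof is correct, and the substantive half is argued by a genuinely different route than the paper's. The easy inclusion is the same in both: $B(t)$ is a unital quotient of $pB$, so the spectrum can only shrink. For the reverse inclusion the paper, like you, reduces to producing a clopen neighborhood $U\ni t$ on which $c(s)$ is invertible, but it gets there via the polar decomposition $c=u|c|$: it first shows $u(s)$ is unitary for $s$ near $t$ (using continuity of $s\mapsto\|u(s)u^*(s)-1\|$ and $s\mapsto\|u^*(s)u(s)-1\|$ and the fact that a partial isometry within distance $1$ of a unitary is unitary), thereby reduces to the case $c\ge 0$, and then propagates the bound $c(t)\ge m1$ to a neighborhood by a contradiction argument with a continuous bump function $f$, the identity $f(c)(s)=f(c(s))$, and the continuity of $s\mapsto\|f(c)(s)\|$. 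You instead lift an inverse of $c(t)$ to an element $d\in B$ and run a Neumann-series argument on $pdc-p$ and $pcd-p$; this avoids both the polar decomposition and the functional calculus and uses only the continuity of $s\mapsto\|x(s)\|$, the supremum formula for the norm, and the existence of a clopen basis, so it is arguably the more elementary and more portable argument (your need to control both $dc-1$ and $cd-1$ plays the same role as the paper's need to check both $uu^*$ and $u^*u$). One small point to tighten: from $\|(dc-1)(s)\|<1$ for every $s\in U$ you conclude $\sup_{s\in U}\|(dc-1)(s)\|<1$; justify this either by the compactness of the clopen set $U$ (a continuous function attains its supremum there) or, more simply, by choosing $U$ inside the open set where both norms are $<\tfrac12$.
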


\begin{proof}
Since $B(t)$ is a quotient of $pB$ for every $p\in \emph{P}_t$ , $\s(c(t))\subseteq\bigcap_{p\in \emph{P}_t}\s_{pB}(pc)$.

For the reverse inclusion assume that $c(t)$ is invertible. It suffices to show that there exists a clopen set $U\subseteq X$ which contains $t$
and that $c(s)$ is invertible for every $s\in U$. Then $pc$ is invertible in $pB$ where $p$ is the projection 
that corresponds to the characteristic function of $U$.

Consider the polar decomposition $c=u|c|$ of $c$, hence $c(s)=u(s)|c(s)|$ for all $s\in X$. 
Since $c(t)$ is invertible, $u(t)$ is unitary. The continuous functions
$s\mapsto \|u(s)u^*(s)-1\|$ and $s\mapsto \|u^*(s)u(s)-1\|$ equal zero at $s=t$. Hence, these 
functions are less than $1$ on some neighborhood $V$ of $t$ in $X$.
Since $u(s)$ is a partial isometry for every $s\in X$, it follows that $u(s)$ must be invertible
(thus unitary) for all $s$ in $V$. 
Hence we  may assume that $c\geq 0$ on $V$. As $c(t)$ is invertible, $c(t)\geq m 1$ for some $m\in \R^+$. 
We only need to show that $c(s)>\frac{m}{2}1$ for all $s$ is some neighborhood of $t$. 
Suppose the contrary that there exists a net $\{s_j\}_{j\in J}$ converging to $t$ such that $\s(c(s_j))$ contains some $\lambda_j<\frac{m}{2}$ for every $j\in J$.
Let $f:\R\to \R$ be the continuous map defined by 
$$
f(x)=\left\{ \begin{array}{ll}
1 & \textrm{if $x<\frac{m}{2}$},\\
2-\frac{2}{m}x & \textrm{if $\frac{m}{2}\leq x\leq m$}, \\
0 &  \textrm{if $x>m$}.
\end{array}\right.
$$
Then $f(c(t))=0$ and $\|f(c(s_j))\|=1$ for every $j\in J$.
As $f(c)(s)=f(c(s))$, we have $f(c)(t)=0$ and $\|f(c)(s_j)\|=1$ for every $j\in J$.
However, this contradicts the continuity of the map $s\mapsto \|f(c)(s)\|$.
\end{proof}

\begin{theorem}
Let $B$ be a  von Neumann algebra and let $a,b\in B$. 
If $\s([b,x])\subseteq\s([a,x])\cup\{0\}$ for every $x\in B$, then $b=p_1 a-p_2 a+z$ for some orthogonal 
central projections $p_1,p_2$ and some $z\in Z(B)$.
\end{theorem}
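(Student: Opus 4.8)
The plan is to run the same two-step strategy as in the factor case, but keeping track of the center $Z(B)$ throughout. Since a von Neumann algebra is semisimple and the hypothesis is exactly the one of Lemma \ref{Bo}, that lemma gives $yz=yaz=0\Rightarrow ybz=0$ for all $y,z\in B$. This says precisely that $b$ lies in the reflexive hull of $\spn\{1,a\}$ with respect to the two-sided action of $B$. Over a von Neumann algebra this reflexivity is $Z(B)$-linear, so the appropriate general form of the reflexivity theorem \cite{Mag} used in the factor proposition yields $b\in Z(B)1+Z(B)a$; that is, $b=z_1a+z_0$ for some $z_1,z_0\in Z(B)$. The remaining task is to show that the central coefficient $z_1$ takes only the values $-1,0,1$ on the part of $X$ where $a$ is genuinely non-central, and this is where Lemma \ref{-1,1} enters. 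I expect the reflexivity step to be the main obstacle; everything after it is careful bookkeeping over the character space $X$.

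To invoke Lemma \ref{-1,1} I must first descend the spectral inclusion to each fibre $B(t)$. For a central projection $p$ (the characteristic function of a clopen subset of $X$) the summand $pB$ is a von Neumann algebra, and restricting the hypothesis to $x\in pB$ one checks that $\s_{pB}([pb,x])\subseteq\s_{pB}([pa,x])\cup\{0\}$ for all $x\in pB$: for such $x$ the $(1-p)$-components of $[a,x]$ and $[b,x]$ vanish, so the global spectra differ from those computed in $pB$ only by the point $0$. Feeding this into Lemma \ref{kvoc}, and using $\bigcap_p\bigl(S_p\cup\{0\}\bigr)=\bigl(\bigcap_pS_p\bigr)\cup\{0\}$, I obtain the fibre inclusion $\s\bigl([b,x](t)\bigr)\subseteq\s\bigl([a,x](t)\bigr)\cup\{0\}$ in $B(t)$ for every $t\in X$ and every $x\in B$.

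Next, fixing $t$, I use that central elements map to scalar multiples of the identity in the primitive quotient $B(t)$, so that $b(t)=z_1(t)a(t)+z_0(t)1$ with $z_1(t),z_0(t)\in\CC$. Hence $[b,x](t)=z_1(t)[a,x](t)$, and the fibre inclusion becomes $z_1(t)\,\s\bigl([a,x](t)\bigr)\subseteq\s\bigl([a,x](t)\bigr)\cup\{0\}$ for all $x$. Provided $a(t)\notin Z(B(t))$, Lemma \ref{-1,1} with $I=Bt$ then forces $z_1(t)\in\{-1,0,1\}$.

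Finally I assemble the central projections. Let $p_a$ be the central support of the commutators $[a,x]$, equivalently the smallest central projection with $(1-p_a)a\in Z((1-p_a)B)$. No nonempty clopen subset of $\mathrm{supp}(p_a)$ can consist of points $t$ with $a(t)$ central (such a subset would produce a nonzero central projection $q\le p_a$ with $q[a,x]=0$ for all $x$, contradicting the choice of $p_a$), so, as $X$ is extremally disconnected, the set of $t\in\mathrm{supp}(p_a)$ with $a(t)\notin Z(B(t))$ is dense there. By the previous paragraph $z_1$ takes values in $\{-1,0,1\}$ on this dense set, and since $z_1\in Z(B)\cong C(X)$ is continuous with closed target set $\{-1,0,1\}$, in fact $z_1(t)\in\{-1,0,1\}$ for all $t\in\mathrm{supp}(p_a)$. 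The clopen sets $\{z_1=1\}$ and $\{z_1=-1\}$ inside $\mathrm{supp}(p_a)$ then give orthogonal central projections $p_1,p_2$ with $p_az_1=p_1-p_2$, so $z_1p_aa=(p_1-p_2)a$. Writing $b=z_1p_aa+z_1(1-p_a)a+z_0$ and noting that $z_1(1-p_a)a$ is central, I arrive at $b=(p_1-p_2)a+z$ with $z=z_1(1-p_a)a+z_0\in Z(B)$, which is the asserted decomposition.
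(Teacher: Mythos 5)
There is a genuine gap, and it sits exactly where you predicted: the reflexivity step. You assert that ``the appropriate general form of the reflexivity theorem \cite{Mag}'' gives $b\in Z(B)1+Z(B)a$ for an arbitrary von Neumann algebra, but \cite[Theorem 1.1]{Mag} (as used in the paper) applies only to factors different from $B(H)$ and, in the module version, to von Neumann algebras with \emph{no central portions of Type I}. For Type I algebras the statement you need is simply false: already in $B=B(H)$ the two-sided reflexivity condition of Lemma \ref{Bo} is equivalent to the local linear dependence $b\xi\in\spn\{\xi,a\xi\}$ for all $\xi$, and if, say, $a=\xi_0\tnz f_0$ is a rank-one nilpotent one can construct rank-one $b\notin\spn\{1,a\}$ satisfying this condition. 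So Lemma \ref{Bo} alone cannot deliver $b\in Z+Za$ on the Type I part; one must go back to the full spectral hypothesis. This is why the paper splits off the Type I summand and treats it by a direct computation with rank-one operators in $C(X)\overline{\tnz}B(H)$, reducing to the eigenvalue analysis of Theorem \ref{rank1}, and invokes \cite{Mag} only on the complementary summand. Your proof as written silently covers only the non-Type-I case.

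Two further points, subordinate to the main one. First, even on the non-Type-I part the reflexivity theorem is applied to weak$^*$ closed modules, so one must localize to a clopen set where $1,a(t),b(t)$ are linearly independent and verify that $Z_p+Z_pa$ is norm-bounded below, hence weak$^*$ closed; the paper does this explicitly and obtains only the \emph{pointwise} conclusion $b(t)\in\spn\{1,a(t)\}$, not a global $b=z_1a+z_0$ with $z_1,z_0\in Z(B)$. Second, your final assembly leans on the continuity of $z_1$ as an element of $Z(B)\cong C(X)$, which is precisely the global statement that has not been established; the paper instead works with the closed sets $F_0=\{t:b(t)\in\CC 1\}$, $F_1=\{t:(b-a)(t)\in\CC 1\}$, $F_2=\{t:(b+a)(t)\in\CC 1\}$ covering $X$ and carves out the clopen pieces from those, which avoids any continuity claim about pointwise coefficients. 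Your fibrewise use of Lemma \ref{kvoc} and Lemma \ref{-1,1} is sound and matches the paper, but the proof cannot be completed along your lines without a separate argument for the Type I summand.
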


\begin{proof}
There exist central orthogonal projections $z_1,z_2\in B$ with $z_1+z_2=1$  
such that $z_1B$ is of Type $I$ while $z_2B$
does not contain central portions of Type $I$. 
We have the inclusions $\s([z_ib,x])\subseteq\s([z_ia,x])\cup\{0\}$ for every $x\in z_iB$, $i=1,2$, therefore $\s_{z_iB}([z_ib,x])\subseteq\s_{z_iB}([z_ia,x])\cup\{0\}$. 
Hence the proof will be divided into two cases, the one where $B$ is of Type $I$ and the one  where $B$ does not contain central portions of Type $I$.

\textit{Case 1.}
Let $B$ be a von Neumann algebra of Type $I$.  It suffices to show that the assertion of the theorem holds for $B=C(X)\overline{\tnz} B(H)$, the von Neumann algebra of continuous functions from a Stonean space $X$ with values in $B(H)$, equipped with a weak operator topology, for a Hilbert space $H$. Indeed, $B$ is a direct sum of such algebras.

We take $\uu,\eta\in H$ with $\ls\xi,\eta\rs=0$.
Choose $\epsilon>0$, $t_0\in X$ and let $U$ be a clopen neighborhood of $t_0$ with the property $|\ls(a(t)-a(t_0))\xi,\eta\rs|<\epsilon$ and $|\ls (b(t)-b(t_0))\xi,\eta\rs|<\epsilon$ for every $t\in U$.
We calculate the spectra of $[a,(\xi\tnz \eta^*)_U]$ and $[b,(\xi\tnz \eta^*)_U]$, where $(\xi\tnz \eta^*)_U$ denotes the function $t\mapsto \chi_U(t)\xi\tnz \eta^*$ for every $t\in X$.
We have 
$$\s([a,(\xi\tnz \eta^*)_U])\cup \{0\}=\cup_{t\in U}\s([a(t),\xi\tnz \eta^*])\cup\{0\}$$ 
and 
$$\s([b,(\xi\tnz \eta^*)_U])\cup\{0\}=\cup_{t\in U}\s([b(t),\xi\tnz \eta^*])\cup\{0\}.$$ 
From the second paragraph of the  proof of Theorem \ref{rank1} we see that $\s([a(t),\xi\tnz\eta^*])\subseteq \{0,-\ls a\xi, \eta\rs,\ls a\xi, \eta\rs\}$ and $\s([a(t),\xi\tnz\eta^*])=\{0\}$ if and only if $\ls a(t)\xi,\eta\rs=0$.   Thus,
$$\s([a,(\xi\tnz \eta^*)_U])\subseteq\cup_{t\in U}\{0,-\ls a(t)\xi, \eta\rs,\ls a(t)\xi, \eta\rs\}$$ 
and by choice of $U$ we  have 
$|\ls a(t)\xi, \eta\rs -\ls a(t_0)\xi, \eta\rs|<\epsilon $.
The same conclusions hold if we replace $a$ by $b$. 
Since $\s([b,(\xi\tnz \eta^*)_U])\subseteq \s([a,(\xi\tnz \eta^*)_U])\cup\{0\}$, it follows that $\ls b(t_0)\xi,\eta\rs=0$ or $|\ls b(t_0)\xi,\eta\rs-\ls a(t_0)\xi, \eta\rs|<\epsilon$ or $|\ls b(t_0)\xi,\eta\rs+\ls a(t_0)\xi, \eta\rs|<\epsilon$ for every $\epsilon>0$. Consequently, we have $\ls b(t_0)\xi,\eta\rs=0$ or $\ls b(t_0)\xi,\eta\rs=\pm\ls a(t_0)\xi,\eta\rs$. Following the proof of Theorem \ref{rank1} we may conclude that $b(t_0)\in \CC 1$ or $b(t_0)\pm a(t_0)\in \CC 1$ for every $t_0\in X$.

Therefore, the union of the closed sets $F_0=\{t\in X:\; b(t)\in \CC 1\}$, $F_1=\{t\in X:\; b(t)-a(t)\in \CC 1\}$ and $F_2=\{t\in X:\; b(t)+a(t)\in \CC 1\}$ equals $X$. Complements of these sets are open and the 
interiors $G_0,G_1,G_2$ of $F_0,F_1,F_2$, respectively, are clopen. 
As $G_0^\co$ is the closure of $F_0^\co$, we have $G_0^\co\subseteq F_1\cup F_2$.
The sets $G_0$, $G_0^\co$ then divide $X$ in the disjoint union of two
clopen sets, contained in $F_0$, $F_1\cup F_2$, respectively. 
Similarly, we divide $G_0^\co$ in the union of two disjoint clopen sets 
$H_1\subseteq F_1$, $H_1^\co\subseteq F_2$. 
The characteristic functions of $G_0,H_1,H_1^\co$ yield central projections 
$p_0,p_1,p_2$ with sum $1$. Moreover, the elements $z_0=p_0b$, $z_1=p_1(b-a)$, $z_2=p_2(b+a)$ are central.
The result is $b=(p_0+p_1+p_2)b=p_1a-p_2a+z_0+z_1+z_2$.

\textit{Case 2.}
Assume now that $B$ does not contain central portions of Type $I$. 
Let us examine the linear independence of $1,a,b$ in the quotient spaces $B(t)$ for $t\in X$.
If the elements $1,a(t_0),b(t_0)$ are linearly independent for some $t_0\in X$, 
then there exists a neighborhood $U$ of $t_0$ such that $1, a(t),b(t)$ are linearly independent 
for every $t\in U$. 
(In order to prove this, we consider the  continuous map $f:X\times S\to \R$  
defined by $f(t,\alpha, \beta,\gamma)\mapsto \|\alpha 1+\beta a(t)+\gamma b(t)\|$, 
where $S$ is the  unit sphere in $\CC^3$.
As $1, a(t_0), b(t_0)$ are linearly independent, $f(t_0,\alpha,\beta,\gamma)>m$ 
for some $m>0$ and for all $(\alpha,\beta,\gamma)\in S$. 
Since $S$ is compact we can find a neighborhood $U$ of $t_0\in X$ 
such that $f(t,\alpha,\beta,\gamma)>\frac{m}{2}$ for all $t\in U$, $(\alpha,\beta,\gamma)\in S$.)

Replacing $U$ by its appropriate subset, if necessary, we may assume that $U$ is clopen.
Its characteristic function is continuous on $X$ and corresponds to some projection $p\in Z(B)$.
Thus $1,a(t),b(t)$ are linearly independent for every $t$
in the character space $X_p$ of $Z_p=Z(pB)$. 
From the hypothesis and since $p$ is central, we have $\s([pb,x])\subseteq \s([pa,x])\cup\{0\}$ 
for every $x\in pB$. 

Consider the map $f':Z_p^2\to pB$, 
defined by $(z_1,z_2)\mapsto z_1+z_2 pa$. 
Since $1$ and $a(t)$ are linearly independent for all $t\in X_p$,  there exists $m'>0$ such that
$\|\alpha+\beta pa(t)\|^2\geq m' (|\alpha|^2+|\beta|^2)$ for every $\alpha, \beta\in \CC$.
Therefore, $\|z_1(t)+z_2(t)pa(t)\|^2\geq m' (|z_1(t)|^2+|z_2(t)|^2)$ for all $z_1,z_2\in Z_p$.
Taking the supremum over $t\in X_p$ gives $\|z_1+z_2pa\|^2\geq m' \max\{\|z_1\|^2,\|z_2\|^2\}$. 
Thus the map $f'$ is bounded from below, hence its range is norm-closed. 
Since the range of any weak* continuous linear map is norm closed if and only if 
it is weak* closed (see, e.g., \cite[Chapter VI, Theorem 1.10]{Con}), $Z_p+Z_pa$ is weak* closed.  
Since $pB$ does not contain central portions of Type I, 
$Z_p+Z_p a$ is reflexive by \cite[Theorem 1.1]{Mag}. Therefore there exist $y,z\in pB$ such that $yz=0$, $ypaz=0$ but $ypbz\neq 0$, 
which contradicts Lemma \ref{Bo}.
Thus, $b(t)\in\spn\{ 1,a(t)\}$ 
for every $t\in X$ for which $1,a(t)$ are linearly independent.
Hence in this case there exist $\lambda (t),\mu(t)\in \CC$ such that $b(t)=\lambda(t) a(t)+\mu(t)$.
Using Lemma \ref{kvoc} we deduce the inclusion $\s([pb(t),px(t)])\subseteq \s([pa(t),px(t)])\cup\{0\}$ 
for all $x\in B$; 
then by Lemma \ref{-1,1} we conclude that $\lambda(t)\in \{-1,0,1\}$. 
In the remaining case, when $a(t)$ is a scalar for some $t$, $b(t)$ must also be a scalar by Lemma \ref{kvoc} and \cite[Theorem 5.2.1]{Aup}.  

Therefore, the union of the closed sets $F_0=\{t\in X:\; b(t)\in \CC 1\}$, 
$F_1=\{t\in X:\; b(t)-a(t)\in \CC 1\}$ and $F_2=\{t\in X:\; b(t)+a(t)\in \CC 1\}$ equals $X$. 
The proof can now be completed by  the same argument as at the end of Case 1.
\end{proof}

\section{Results on $C^*$-algebras}

In this section we will consider inner derivations of $C^*$-algebras. Our first result is an easy consequence of Lemma \ref{Bo} and a deeper result from \cite{ABEV}.

\begin{theorem}
Let $B$ be a unital $C^*$-algebra and $a,b\in B$ with $a$ normal. If $\s([b,x])\subseteq \s([a,x])$ for all $x\in B$, then $b\in\{a\}''$, the bicommutant of $\{a\}$ in $B$.
\end{theorem}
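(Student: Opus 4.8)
The plan is to extract a purely algebraic (annihilator-type) condition from the spectral hypothesis via Lemma \ref{Bo}, and then to quote the characterization of the bicommutant from \cite{ABEV}. First, every $C^*$-algebra is semisimple, and the hypothesis $\s([b,x])\subseteq\s([a,x])$ trivially strengthens to $\s([b,x])\subseteq\s([a,x])\cup\{0\}$, so Lemma \ref{Bo} applies verbatim. It yields the implication
\[
yz=0 \ \text{ and }\ yaz=0 \ \Longrightarrow\ ybz=0 \qquad\text{for all } y,z\in B.
\]
This is the entire content we draw from the spectral inclusion, and I would emphasize that normality of $a$ plays no role up to this point; it is Lemma \ref{Bo} alone that converts the spectral statement into the displayed orthogonality condition on the pair $(a,b)$.

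The second step is to invoke the deeper result from \cite{ABEV}, which supplies exactly the bridge from this annihilator condition to the relative bicommutant: for a \emph{normal} element $a$, any $b$ satisfying the displayed implication must lie in $\{a\}''$. This is the stage at which normality becomes indispensable, since the argument in \cite{ABEV} rests on the spectral and representation theory available for normal elements; in particular, Fuglede's theorem guarantees $\{a\}'=\{a,a^*\}'$, so that the relative bicommutant $\{a\}''$ already accounts for $a^*$. Chaining the two steps gives $b\in\{a\}''$, completing the proof.

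The argument on our side is thus only a short reduction, and the real care lies in matching interfaces rather than in computation. The main point to verify is that the condition produced by Lemma \ref{Bo}, namely ``$yz=yaz=0$ forces $ybz=0$,'' coincides precisely with the annihilator hypothesis under which \cite{ABEV} concludes membership in the bicommutant, and that the cited result is stated for (or transfers to) an arbitrary unital $C^*$-algebra $B$ rather than merely $B(H)$ or a von Neumann algebra. I would also confirm that the notion of relative bicommutant used in \cite{ABEV} agrees with $\{a\}''$ as defined here, i.e.\ the double commutant taken inside $B$. Once these identifications are checked, no further estimates are needed.
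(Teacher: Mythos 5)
Your first step is exactly the paper's: Lemma \ref{Bo} applies (a $C^*$-algebra is semisimple) and yields that $yz=yaz=0$ forces $ybz=0$ for all $y,z\in B$, with no use of normality. The gap is in the second step. You attribute to \cite{ABEV} a statement of the form ``for normal $a$, any $b$ satisfying this annihilator condition lies in $\{a\}''$,'' and you explicitly defer checking that such a statement exists in the form you need. It does not: the result quoted from \cite{ABEV} is about \emph{bilinear maps preserving zero products} --- namely, that a continuous bilinear map $\phi$ on a suitable Banach algebra (e.g.\ a $C^*$-algebra) with $\phi(u,v)=0$ whenever $uv=0$ must satisfy $\phi(xy,z)=\phi(x,yz)$. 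Nothing in \cite{ABEV} mentions bicommutants or the condition ``$yz=yaz=0\Rightarrow ybz=0$.'' So the entire bridge from the annihilator condition to $b\in\{a\}''$ is missing from your argument; you have in effect restated the theorem and cited it.

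The missing construction is short but essential. Set $A=\{a\}'$; since $a$ is normal, Fuglede's theorem gives $\{a\}'=\{a,a^*\}'$, so $A$ is a $C^*$-subalgebra of $B$ --- this is where normality actually enters, not inside \cite{ABEV}. Define $\phi:A\times A\to B$ by $\phi(y,z)=ybz$. If $u,v\in A$ and $uv=0$, then $uav=auv=0$ because $u$ commutes with $a$, so Lemma \ref{Bo} gives $ubv=0$, i.e.\ $\phi(u,v)=0$. Now the zero-product theorem of \cite{ABEV} applies to $\phi$ on the $C^*$-algebra $A$ and yields $\phi(xy,z)=\phi(x,yz)$ for all $x,y,z\in A$; taking $x=z=1$ gives $yb=by$ for every $y\in\{a\}'$, which is precisely $b\in\{a\}''$. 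Without this reduction to the commutant and the explicit bilinear map, your proof does not close.
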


\begin{proof}
Let $A=\{a\}'$. Define $\phi:A\times A\to B$ by $\phi(y,z)=ybz$. For all $u,v\in A$, $uv=0$ implies $uav=0$. 
According to Lemma \ref{Bo} this further gives $\phi(u,v)=0$. 
Then $\phi(xy,z)=\phi(x,yz)$ for all $x,y,z\in A$ \cite[Theorem 2.11 and Example 2, p. 137]{ABEV}. 
Setting $x=z=1$ we get $yb=by$.
\end{proof}

From now on we will consider the norm inequality condition \eqref{dva}, which, as observed in the introduction, follows immediately from the spectral inclusion condition \eqref{ena} if $a$ and $b$ are selfadjoint. 
When can \eqref{dva} occur? 
For instance, if $b=a^n$, then we see from $[b,x]=a^{n-1}[a,x]+a^{n-2}[a,x]a+\cdots +[a,x]a^{n-1}$  
that \eqref{dva} holds with $M=n\|a\|^{n-1}$. Consequently, \eqref{dva} is fulfilled whenever $b$ is 
a polynomial in $a$. In the next theorem we will show directly that \eqref{dva} implies that $b$ is a 
Lipschitz function $f$ of $a$, provided that $B$ is a prime C$^*$-algebra and $a$ is a normal element. 
This is perhaps not the best possible conclusion, however, a complete description of the properties
of the appropriate functions $f$ could be too difficult. In \cite{JW} Johnson and Williams 
considered the  special case where $B=B(H)$. 
By \cite[Corollary 3.7]{JW}, in this case the condition (N) is equivalent to the requirement
that the range of $[b,x]$ is contained in the range of  $ [a,x] $ for every $x\in B(H)$. 
The description of appropriate  functions $f$ in the case $B=B(H)$ in   \cite[Theorems 3.6 and 4.1]{JW}
is quite entangled. In a general
C$^*$-algebra $B$ the range inclusion ${\rm im}\,[b,x]\subseteq{\rm im}\,[a,x]$ for all $x\in B$ implies the condition
(N) by \cite[Theorem 6.5]{KS1}.

\begin{theorem}\label{nenorm}
Let $B$ be a  prime $C^*$-algebra and let $a,b\in B$ satisfy $\|[b,x]\|\leq M\|[a,x]\|$ for all 
selfadjoint $x\in B$ and some $M>0$. If $a$ is normal, then $b=f(a)$  where $f$ is Lipschitz (with a Lipschitz constant $M$) on the spectrum of $a$.
\end{theorem}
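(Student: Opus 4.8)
The plan is to argue in three stages: pin down the commutation of $b$ with $a$, construct the function $f$ with $b=f(a)$, and finally extract the Lipschitz constant.

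First I would record that $b$ commutes with the abelian C$^*$-subalgebra $A:=C^*(a,1)\cong C(\s(a))$. Since $a$ is normal, the selfadjoint elements $\mathrm{Re}\,a=\tfrac12(a+a^*)$ and $\mathrm{Im}\,a=\tfrac1{2i}(a-a^*)$ commute with $a$, so $[a,\mathrm{Re}\,a]=[a,\mathrm{Im}\,a]=0$; applying \eqref{dva} to these two selfadjoint elements forces $[b,\mathrm{Re}\,a]=[b,\mathrm{Im}\,a]=0$, i.e. $b$ commutes with $a$ and $a^*$, hence with all of $A$. (The same applies to $b^*$, since $\|[b^*,x]\|=\|[b,x]\|$ for selfadjoint $x$.)

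Second, and this is the heart of the matter, I would construct a function $f$ on $\s(a)$ with $b=f(a)$, i.e. show that $b$ lies in $A$ and not merely in the relative commutant $A'\cap B$. Fix $\lambda_0\in\s(a)$ and, for small $\e>0$, a real bump $s=s_{\lambda_0,\e}\in C(\s(a))$ with $0\le s\le1$, $s(\lambda_0)=1$, supported in the $\e$-ball about $\lambda_0$. For selfadjoint $h\in B$ put $x=s(a)hs(a)$; using that both $a$ and $b$ commute with $s(a)$ one computes
\[
[a,x]=r(a)\,h\,s(a)-s(a)\,h\,r(a),\qquad [b,x]=s(a)[b,h]s(a),
\]
where $r(\lambda)=(\lambda-\lambda_0)s(\lambda)$ has $\|r(a)\|\le\e$. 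Thus \eqref{dva} yields $\|s(a)[b,h]s(a)\|\le 2M\e\|h\|$ for every selfadjoint $h$, which says that, localized to the hereditary subalgebra $\overline{s(a)Bs(a)}$, the element $b$ is central up to an error vanishing with $\e$. Since hereditary subalgebras of a prime C$^*$-algebra are again prime and the center of a (unital) prime C$^*$-algebra is scalar, letting $\e\to0$ should force $b$ to agree with a single scalar $f(\lambda_0)$ on the spectral part of $a$ near $\lambda_0$; collecting these scalars defines $f:\s(a)\to\CC$ with $b=f(a)$.

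Third, once $b=f(a)$ with $f$ continuous is in hand, I would extract the Lipschitz bound from a two-point commutator estimate. For distinct $\lambda_0,\mu_0\in\s(a)$ take real bumps $s,t$ localized at $\lambda_0,\mu_0$ with disjoint supports; primeness supplies $y\in B$ with $w:=s(a)yt(a)\ne0$. Writing $f(a)s(a)=f(\lambda_0)s(a)+O(\omega_f(\e))$ and $t(a)f(a)=f(\mu_0)t(a)+O(\omega_f(\e))$, with $\omega_f$ the modulus of continuity of $f$, a direct computation for the selfadjoint element $x=w+w^*$ gives
\[
[a,x]=(\lambda_0-\mu_0)(w-w^*)+O(\e\|y\|),\qquad [b,x]=(f(\lambda_0)-f(\mu_0))(w-w^*)+O(\omega_f(\e)\|y\|).
\]
Substituting into \eqref{dva}, dividing by $\|w-w^*\|$, and letting $\e\to0$ yields $|f(\lambda_0)-f(\mu_0)|\le M|\lambda_0-\mu_0|$, the desired Lipschitz bound.

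The hard part will be the second stage: passing from ``$b$ commutes with $A$'' to ``$b$ is a \emph{continuous} function of $a$''. The commutant $A'\cap B$ can be far larger than $A$ when $a$ has multiplicity, and it is exactly primeness, transmitted through the primeness of the shrinking corners $\overline{s(a)Bs(a)}$, that must rule out $b$ taking different scalar values over a single spectral point. Making the limit $\e\to0$ rigorous is the technical core: one must treat the purely continuous part of $\s(a)$, where $s(a)\to0$ and single-point localization degenerates, and in the Lipschitz step one must control $\|w-w^*\|$ from below against the error terms $O(\e\|y\|)$ by choosing the connecting element $y$ so that $s(a)yt(a)$ does not collapse as the bumps shrink.
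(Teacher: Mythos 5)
Your first stage matches the paper's opening step, and your third\-/stage two\-/point computation is essentially the estimate on which the paper's proof turns. The genuine gap is your second stage, which you yourself flag as the technical core but do not carry out, and whose sketched mechanism does not close. The inequality $\|s(a)[b,h]s(a)\|\le 2M\e\|h\|$ bounds the commutator against $\|h\|$ rather than against $\|s(a)hs(a)\|$, so it does not actually say that $b$ is approximately central in the hereditary subalgebra $\overline{s(a)Bs(a)}$; even granting approximate centrality, deducing ``approximately scalar'' requires a quantitative estimate of the distance to the scalars by the norm of the induced inner derivation in a prime $C^*$-algebra, which you do not supply; and, as you concede, the limit $\e\to0$ degenerates at points of continuous spectrum, where $s(a)\to0$ and there is no spectral subspace near $\lambda_0$ on which $b$ could be forced to act as a scalar. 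As written, the passage from ``$b$ commutes with $C^*(a,1)$'' to ``$b\in C^*(a,1)$'' is not proved.

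The paper bypasses your stage 2 entirely by pushing stage 1 one step further: since $a$ is normal and commutes with $b$, Fuglede--Putnam gives $ab^*=b^*a$; then $x=b+b^*$ is a selfadjoint element commuting with $a$, so \eqref{dva} forces $[b,b+b^*]=0$, i.e.\ $b$ is normal. Hence $C^*(a,b,1)$ is commutative, isomorphic to $C(K)$ with $K\subseteq\CC^2$ the joint spectrum of $(a,b)$. One then runs exactly your stage\-/3 computation on $K$ --- bump functions coming from a partition of unity subordinate to a cover of $K$ by small cubes, primeness supplying an element linking two disjoint patches --- and obtains $|\B(t)-\B(t')|\le M|\A(t)-\A(t')|$ for all $t,t'\in K$, where $\A,\B$ are the coordinate functions. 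This single inequality yields both conclusions at once: it shows that $\B$ factors through $\A$, so $b=f(a)$, and that $f$ is $M$-Lipschitz. (It also resolves the error\-/control worry you raise at the end: every error term there contains the normalized linking element $\phi_jy\phi_k$ as a factor, so the errors are $O(\e)$ rather than $O(\e\|y\|)$; in your notation this amounts to writing $(a-\lambda_0)s(a)=q(a)\,s(a)$ with $\|q(a)\|\le 2\e$ so that the error factors through $w$ itself.) If you replace your second stage by the normality of $b$ and transplant your third stage to the joint spectrum, you recover the paper's proof.
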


\begin{proof}
We can assume $a\not\in \CC 1$ without loss of generality.

Our assumption implies that $a$ and $b$ commute. Since $a$ is normal, $a$ commutes also with $b^*$ by
the Putnam-Fuglede theorem, and then  the condition (N) implies that $b$ 
is normal.
Denote by $A$ the  $C^*$-algebra generated by $a$ and $b$. The Gelfand transformation is an 
isomorphism between $A$ and $C(\Omega)$, the algebra of continuous functions on the character space $\Omega$  of $A$, which can be identified with a compact subset $K$ of $\CC^2$ 
via the homeomorphism $\psi:\chi\mapsto (\chi(a),\chi(b))$.
Let $\A$ and $\B$ denote the Gelfand transforms of $a$ and $b$, regarded as functions on $K$.
(These are just the restrictions to $K$ of the two coordinate projections $\mathbb{C}^2\to\mathbb{C}$.)

We can divide $\mathbb{C}^2=\mathbb{R}^4$ into a grid of small closed cubes with sides parallel to the 
coordinate axes such that the intersection of any two cubes is either empty or a common face. Then it is 
not hard to see that there exists $p\in\mathbb{N}$ such that each
cube intersects at most $p$  other cubes ($p=3^4-1$). By taking slightly larger open cubes we can cover the
compact set $K$ by a finite family $\{P_i\}_{i=1}^n$ of such cubes, so that each intersects at most
$p$ other cubes; moreover, for a given $\epsilon>0$, by the uniform continuity we may assume that the
cubes $P_i$ are so small that $|\A(t)-\A(t')|<\epsilon$ and $|\B(t)-\B(t')|<\epsilon$ whenever
$t,t'\in K$ are such that $t\in P_i$ and $t'\in P_j$ for some $i,j$ with $P_i\cap P_j\ne\emptyset$.
  
 Let $V_i=P_i\cap K$ and choose a partition of unity $\{\hat{\phi_i}\}_{i=1}^n$ subordinate to
 the covering $\{V_i\}_{i=1}^n$ of $K$.
 Then 
 $$\|\A-\sum_{i=1}^n \A(t_i)\hat{\phi_i}\|<\e,\  \mbox{and}\  \|\B-\sum_{i=1}^n \B(t_i)\hat{\phi_i}\|
 <\e\ \mbox{for arbitrary}\ t_i\in V_i.$$
 Note also that $\hat{\phi_j}\sum_{i:V_i\cap V_j\neq \emptyset}\hat{\phi_i}=\hat{\phi_j}$ for all $ 1\leq j \leq n$.
 Moreover, if $V_i\cap V_j\neq \emptyset$ then $|\A(t_i)-\A(t_j)|<\e$ for arbitrary $t_i\in V_i, t_j\in V_j$.
   
 Choose $1\leq j,k\leq n$ such that $V_j\cap V_k=\emptyset$. Since $B$ is prime, there exists 
 $y\in B$ such that $\|\phi_jy\phi_k\|=1$. As $\phi_j\phi_k=\phi_j\phi_k=0$, we have $\|\phi_jy\phi_k\pm\phi_ky^*\phi_j\|= 2$.
   Set $x=\phi_jy\phi_k+\phi_ky^*\phi_j$. Take arbitrary $t_i\in V_i$. Let $L(j)$ denote  the set of all $i\in \{1,\ldots, n\}$ such that $V_i\cap V_j\neq \emptyset$. For $p,q\in \R$ we write $p\approx_\e q$ if $|p-q|<\e$.
    According to the above observations we can estimate 
  \begin{eqnarray*}
  \|[a,x]\|&\approx_{4\e}& \left\|\left[\sum\nolimits_{i=1}^n\A(t_i)\phi_i,x\right]\right	\| \\
  &=& \left\|\sum\nolimits_{L(j)}\A(t_i)(\phi_i\phi_jy\phi_k-\phi_ky^*\phi_j\phi_i)
    -\sum\nolimits_{L(k)}\A(t_i)(\phi_jy\phi_k\phi_i-\phi_i\phi_ky^*\phi_j)\right\| \\
  &\approx _{4p\e}&
    \left\|\A(t_j)\phi_j\left(\sum\nolimits_{L(j)}\phi_i\right)y\phi_k-\A(t_j)\phi_ky^*\phi_j\left(\sum\nolimits_{L(j)}\phi_i\right)\right.\\
  & & \left.-\A(t_k)\phi_jy\phi_k\left(\sum\nolimits_{L(k)}\phi_i\right)+\A(t_k)\phi_k\left(\sum\nolimits_{L(k)}\phi_i\right)y^*\phi_j\right\| \\
  &=&\|\A(t_j)(\phi_jy\phi_k-\phi_ky^*\phi_j)-\A(t_k)(\phi_jy\phi_k-\phi_ky^*\phi_j)\| \\
  &=&2 |\A(t_j)-\A(t_k)|.
  \end{eqnarray*} 
   In the same manner we show that $\|[b,x]\|\approx_{4(p+1)\e}2|\B(t_j)-\B(t_k)|$.
   Using the condition (N) it follows now that $0\leq M\|[a,x]\|-\|[b,x]\|\approx_{8(p+1)\e} 2
   (M|\A(t_j)-\A(t_k)|-|\B(t_j)-\B(t_k)|)$. 
   Since $\e>0$ and $t_i\in V_i$ were arbitrary, we conclude that 
   \begin{equation*}\label{L}|\B(t)-\B(t')|\leq M|\A(t)-\A(t')|\ \mbox{for all}\ t, t'\in K.\end{equation*}
   From this we see in particular that $\A(t)=\A(t')$ implies $\B(t)=\B(t')$, hence $b$ is a function of
   $a$, say $b=f(a)$. The above inequality  means that this function $f$ is Lipschitz.
\end{proof}

   \begin{corollary}\label{Tn}
    Let $B$ be a prime  $C^*$-algebra. The following conditions are equivalent for normal elements $a, b\in B$:
    \begin{enumerate}
    \item $\|[a,x]\|=\|[b,x]\|$ for every selfadjoint $x\in B$.
    \item $b=\lambda a+\mu 1$  or $b=\lambda a^*+\mu 1$ for some $\lambda, \mu\in \CC$ with $|\lambda|=1$.
    \end{enumerate}
  \end{corollary}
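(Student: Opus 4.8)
The plan is to treat the two implications separately; the content is entirely in $(1)\Rightarrow(2)$, while $(2)\Rightarrow(1)$ is a direct computation. If $b=\lambda a+\mu 1$ with $|\lambda|=1$, then $[b,x]=\lambda[a,x]$ and hence $\|[b,x]\|=|\lambda|\,\|[a,x]\|=\|[a,x]\|$ for every $x$. If instead $b=\lambda a^*+\mu 1$, I would use that for \emph{selfadjoint} $x$ one has $[a^*,x]=-[a,x]^*$, so that $\|[a^*,x]\|=\|[a,x]\|$ and again $\|[b,x]\|=|\lambda|\,\|[a^*,x]\|=\|[a,x]\|$; here it is essential that $x$ be selfadjoint, exactly as in condition (N).

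For $(1)\Rightarrow(2)$ I would first dispose of the degenerate case. If $a\in\CC 1$, then $[a,x]=0$ for all $x$, so (1) forces $[b,x]=0$ for every selfadjoint (hence every) $x$; thus $b$ is central, and since the center of a prime unital $C^*$-algebra is $\CC 1$ we get $b\in\CC 1$ and the conclusion holds trivially. So assume $a\notin\CC 1$. The equality in (1) amounts to condition (N) with $M=1$ holding in both directions. Applying Theorem \ref{nenorm} to $\|[b,x]\|\le\|[a,x]\|$ (with $a$ normal) yields $b=f(a)$ for some $1$-Lipschitz function $f$ on $\s(a)$, and applying it to $\|[a,x]\|\le\|[b,x]\|$ (with $b$ normal) yields $a=g(b)$ for some $1$-Lipschitz function $g$ on $\s(b)$.

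Next I would show that $f$ is a bijective isometry of $\s(a)$ onto $\s(b)$. By the spectral mapping theorem $\s(b)=f(\s(a))$ and $\s(a)=g(\s(b))$, so $f$ and $g$ are onto; moreover $a=g(f(a))$ together with the faithfulness of the continuous functional calculus gives $g\circ f=\mathrm{id}$ on $\s(a)$ (and symmetrically $f\circ g=\mathrm{id}$ on $\s(b)$). Hence for $s,t\in\s(a)$ we get $|s-t|=|g(f(s))-g(f(t))|\le|f(s)-f(t)|\le|s-t|$, which forces $|f(s)-f(t)|=|s-t|$. Finally I would invoke the classical fact that a distance-preserving map between subsets of the Euclidean plane extends to an affine isometry of $\CC\cong\R^2$ (fix a base point and use the polarization identity to see that $f$ preserves inner products of difference vectors, so it extends to an affine isometry $F$). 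Every affine isometry of $\R^2$ is of the form $F(z)=\lambda z+\mu$ or $F(z)=\lambda\bar z+\mu$ with $|\lambda|=1$; since $F$ agrees with $f$ on $\s(a)$, the first case gives $b=f(a)=\lambda a+\mu 1$, while the second, using that the functional calculus sends the conjugation function to $a^*$, gives $b=\lambda a^*+\mu 1$, both with $|\lambda|=1$. This is precisely (2).

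The step I expect to be the main obstacle is the geometric one: showing that the isometry between the compact planar sets $\s(a)$ and $\s(b)$ is the restriction of a global rigid motion, and in particular handling the degenerate possibility that $\s(a)$ lies on a line, where the ambient extension is no longer unique although one still exists. The polarization argument handles this uniformly, but some care is needed to match the orientation-reversing motions with $a^*$ rather than $a$, which is what produces the second alternative in (2).
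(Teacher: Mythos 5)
Your proposal is correct and follows essentially the same route as the paper: apply Theorem \ref{nenorm} in both directions to get mutually inverse $1$-Lipschitz functions $f,g$ between $\s(a)$ and $\s(b)$, deduce that $f$ is a surjective isometry of planar compact sets, and conclude $f(t)=\lambda t+\mu$ or $f(t)=\lambda\overline{t}+\mu$ with $|\lambda|=1$. The only difference is that you spell out the rigid-motion extension step (and the degenerate collinear case) that the paper dismisses as ``easy to see,'' which is a reasonable elaboration rather than a new argument.
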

  
  \begin{proof}
By replacing $a$ with $a-t_01$ for some $t_0\in \s(a)$ we may assume that $0\in \s(a)$. 
From Theorem \ref{nenorm} we obtain $b=f(a)$ for a Lipschitz function $f$ on $\s(a)$ and $a=g(b)$ for a Lipschitz function $g$ on $\s(b)$, with both $f$ and $g$ having Lipschitz constants 1. Then $g\circ f=id_{\s(a)}$ yields $|f(t)-f(t')|=|t-t'|$ for all $t,t'\in \s(a)$.
 Replacing $f$ by $f-f(0)$ we may assume that $f(0)=0$.
 It is easy to see that then $f$ must take one of the following forms: $f(t)=e^{i\theta}t$ or 
 $f(t)=e^{i\theta}\overline t$ for some $\theta\in [0,2\pi)$. This establishes (2).

The converse is trivial.
  \end{proof}

  \begin{theorem}\label{T1}
  Let $B$ be a  $C^*$-algebra on some Hilbert space and $\BB$ be its weak$^*$ closure. The following conditions are equivalent for selfadjoint elements $a, b\in B$:
  \begin{enumerate}
  \item $\rho([a,x])=\rho([b,x])$ for every $x\in B$.
  \item $\|[a,x]\|=\|[b,x]\|$ for every selfadjoint $x\in B$.
  \item For every primitive ideal $P$ of $B$ we have that $a+b+P\in \CC 1+P$ or $a-b+P\in \CC 1+P$.
  \item $b=ca+z$ for some $c,z\in Z(\BB)$ with $z=z^*, c=c^*, c^2=1$.
  \end{enumerate}
  \end{theorem}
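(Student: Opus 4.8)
The plan is to prove the cyclic chain of implications $(1)\Rightarrow(2)\Rightarrow(3)\Rightarrow(4)\Rightarrow(1)$, since several of these are either immediate or reducible to tools already developed. The implication $(1)\Rightarrow(2)$ is essentially the observation made in the introduction: when $a,b$ are selfadjoint and $x$ is selfadjoint, the commutators $[a,x]$ and $[b,x]$ are anti-selfadjoint (skew-adjoint), hence normal, so their spectral radii equal their norms; thus the equality of spectral radii for selfadjoint $x$ is the equality of norms. (One must note that (1) is stated for \emph{all} $x\in B$, which trivially contains the selfadjoint ones, so restricting is free.) The implication $(4)\Rightarrow(1)$ should also be reasonably direct: if $b=ca+z$ with $c$ a central self-adjoint symmetry ($c^2=1$) and $z$ central selfadjoint, then $[b,x]=[ca,x]=c[a,x]$ because $c,z$ are central in $\BB\supseteq B$, and since $c$ is a central unitary, multiplication by $c$ preserves the spectrum up to the relation $\s(c[a,x])=\s([a,x])$ on each central summand where $c=\pm1$; more carefully, $\rho(c[a,x])=\rho([a,x])$ because $c$ is a central symmetry, giving $\rho([b,x])=\rho([a,x])$.

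The substantive content lies in $(2)\Rightarrow(3)$ and $(3)\Rightarrow(4)$. For $(2)\Rightarrow(3)$, I would localize at a fixed primitive ideal $P$. Passing to the prime (indeed primitive) quotient $B/P$, condition (2) descends to $\|[b+P,\bar x]\|\le\ldots$ — more precisely the norm equality on selfadjoint elements survives in the quotient once one checks that the quotient norm of a commutator is controlled appropriately; this is the step requiring care, since quotient norms need not behave simply. The cleaner route is to apply Corollary \ref{Tn} (or Theorem \ref{nenorm}) to the prime $C^*$-algebra $B/P$: here $a+P$ and $b+P$ are selfadjoint (hence normal) elements satisfying $\|[a+P,\bar x]\|=\|[b+P,\bar x]\|$ for all selfadjoint $\bar x$, so by the corollary $b+P=\lambda(a+P)+\mu1$ or $b+P=\lambda(a+P)^*+\mu1$ with $|\lambda|=1$; since everything is selfadjoint the two forms coincide, $\lambda$ must be real (so $\lambda=\pm1$), and self-adjointness of $b+P$ forces $\mu\in\R$. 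This yields exactly $a-b\in\CC1+P$ or $a+b\in\CC1+P$.

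For $(3)\Rightarrow(4)$, the task is to glue the local scalars $\lambda=\pm1$ across all primitive ideals into genuine \emph{central} elements of the weak$^*$ closure $\BB$. I would introduce the two closed sets of primitive ideals $\{P:a-b\in\CC1+P\}$ and $\{P:a+b\in\CC1+P\}$, whose union is the whole primitive spectrum by (3); the idea is that membership conditions like $a-b\in\CC1+P$ are governed by the behavior of the central support projections, and the partition of the spectrum into the two types produces an orthogonal pair of central projections $p_1,p_2$ in $\BB$ (coming from clopen-type decompositions of the center, exactly as in the von Neumann algebra theorem in Section 4). Setting $c=p_1-p_2$ (a central symmetry with $c^2=1$) and absorbing the scalar parts into a central selfadjoint $z$, one recovers $b=ca+z$. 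The main obstacle I anticipate is precisely this gluing step: turning the pointwise/primitive-ideal-wise scalar relations into central elements of $\BB$ requires a Dauns–Hofmann or central-decomposition argument and careful attention to the overlap set where \emph{both} $a-b$ and $a+b$ lie in $\CC1+P$ (which forces $a\in\CC1+P$, so the choice of sign is immaterial there and the two central projections can be chosen orthogonal with sum a central projection dominating the relevant support). Making the passage from $B$ to its weak$^*$ closure $\BB$ rigorous — so that the central projections and the element $z$ actually live in $Z(\BB)$ — is where I expect the real work, and it is natural to lean on the structural results and the style of argument already used for von Neumann algebras earlier in the paper.
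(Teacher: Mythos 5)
Your outline of the cycle $(1)\Rightarrow(2)\Rightarrow(3)\Rightarrow(4)\Rightarrow(1)$ is the right one, and $(1)\Rightarrow(2)$ and $(4)\Rightarrow(1)$ are fine, but the two substantive implications are left with genuine gaps. In $(2)\Rightarrow(3)$ you correctly identify that the norm equality must descend to the primitive quotient $B/P$ before Corollary \ref{Tn} can be applied, but you never supply that argument; saying the ``cleaner route'' is to apply Corollary \ref{Tn} in $B/P$ does not avoid the issue, since the hypothesis of that corollary in $B/P$ \emph{is} the descended equality. The paper closes this with a quasicentral approximate unit $\{e_\lambda\}$ for the ideal: since $\|y+P\|=\lim_\lambda\|(1-e_\lambda)y\|$ and $e_\lambda$ asymptotically commutes with everything, one gets $\|[a,x]+P\|=\lim_\lambda\|[a,(1-e_\lambda)x]\|=\lim_\lambda\|[b,(1-e_\lambda)x]\|=\|[b,x]+P\|$. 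Without some such device the quotient norms of $[a,x]$ and $[b,x]$ could a priori drop by different amounts, and the selfadjointness of the lift alone does not save you.

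The larger gap is $(3)\Rightarrow(4)$, which you explicitly defer as the place ``where the real work is.'' The gluing strategy you sketch --- clopen-type decompositions of ${\rm Prim}(B)$ plus Dauns--Hofmann --- is not how the paper proceeds and is genuinely problematic: ${\rm Prim}(B)$ need not be Hausdorff, the sets $\{P:\, a-b+P\in\CC 1+P\}$ and $\{P:\, a+b+P\in\CC 1+P\}$ are in general not clopen, and the paper's own Remark after the theorem shows that the Hausdorff case is exactly the special situation in which one can do better (choose $z\in Z(B)$); in general one cannot, as the example with $C([0,1],M_2(\CC))$ shows, so a purely topological gluing over ${\rm Prim}(B)$ cannot work. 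The paper's actual mechanism is algebraic: from (3), each primitive ideal contains $[a+b,B]$ or $[a-b,B]$, hence contains $[a-b,B]B[a+b,B]$, so semisimplicity gives $[a-b,B]B[a+b,B]=0$; then with $S=[a-b,B]$ and $T=[a+b,B]$ one has $S\BB T=0$, and the standard central-support argument in the von Neumann algebra $\BB$ produces a central projection $p\in Z(\BB)$ with $ps=s$ for $s\in S$ and $(1-p)t=t$ for $t\in T$. Hence $(1-p)(a-b)$ and $p(a+b)$ lie in $Z(\BB)$ and $b=(1-2p)a+z$ with $c=1-2p$. This passage from the primitive-ideal-wise statement to the single annihilation identity, and then to a central projection of $\BB$, is the key idea missing from your proposal.
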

  
  \begin{proof}
Take a selfadjoint $x\in B$. Then $[a,x]$, $[b,x]$ are anti-selfadjoint and $\|[a,x]\|=\rho([a,x])$, $\rho([b,x])=\|[b,x]\|$. Thus,  (1) implies (2).
  
Assume that (2) holds. 
We will first observe that a passage to the quotient  $B/I$ for an arbitrary ideal $I$ in $B$ 
preserves the condition (2).
 This observation follows from the existence of a quasicentral approximate unit in $I$ (see, e.g., \cite[Exercise 10.5.6]{KR}).
 This is an increasing net $\{e_\lambda\}_{\lambda\in \Lambda}$ of positive elements in $I$ which
 is an approximate unit for $I$ and satisfies $\lim_\lambda\|e_\lambda z-ze_\lambda\|= 0$ for every $z\in B$.
 Since $\|y+I\|=\lim _\lambda \|(1-e_\lambda)y\|$ for all $y\in B$ (see, e.g., \cite[Exercise 4.6.60]{KR}), we have 
\begin{align*}
\|[a,x]+I\| |=\lim_\lambda\|(1-e_\lambda)[a,x]\|=\lim_\lambda\|[a,(1-e_\lambda) x]\|\\
=\lim_\lambda\|[b,(1-e_\lambda) x]\|                                                             
                      =\lim_\lambda\|(1-e_\lambda)[b,x]\|                                                     =\|[b,x]+I\|.
\end{align*}
Taking for $I$ any primitive ideal $P$, we now deduce from Corollary \ref{Tn} that $a+b+P\in \CC 1+P$ 
or $a-b+P\in \CC 1+P$.  This proves (3).

If we assume (3), then $[a+b,x]\in P$ for every $x\in B$ or $[a-b,y]\in P$ for every $y\in B$. Hence $[a-b,B]B[a+b,B]\subseteq P$ for every primitive ideal $P$.
 Since $B$ is semisimple, we obtain $[a-b,B]B[a+b,B]=0$. Write $S=[a-b,B]$, $T=[a+b,B]$. 
 It follows $S\BB T=0$. Since $\BB$ is a von Neumann algebra, 
there exists a projection $p\in Z(\BB)$ such that $ps=s$ for all $s\in S$ and $(1-p)t=t$ for all $t\in T$.
 Therefore, $(1-p)[a-b,x]=0$ and $p[a+b,x]=0$ for all $x\in B$.
 Since $p$ is central, $z_1=(1-p)(a-b)\in Z(\BB)$ and $z_2=p(a+b)\in Z(\BB)$.
 Hence, $b=(1-p)b+pb=(1-p)a-z_1+z_2-pa=(1-2p)a+z$ for an element $z=z^*\in Z(\BB)$. Taking $c=1-2p$ establishes (4). 

 The implication (4)$\Rightarrow $(1) is clear.
  \end{proof}
  
The elements $c$ and $z$ may be sometimes chosen from  $Z(B)$, but in general we cannot expect this.
\begin{example}
Take $B=C([0,1],M_2(\CC))$, the $C^*$-algebra of continuous functions with matrix values. 
Define $a$ and $b$ by $a(t)=|1-2t|J$, $b(t)=(1-2t)J$, where $J$ is an arbitrary non-scalar matrix. The two
elements $a$ and $b$ satisfy (1), therefore  $b=ca+z$ for some $c,z\in Z(\BB)$ with $z=z^*, c=c^*, c^2=1$.
If $c,z$  belonged to $B$, we would have $b(t)=c(t)a(t)+z(t)$ for every $t\in [0,1]$. Hence, $c(t)=1$ on $[0,\frac{1}{2})$ and
$c(t)=-1$ on  $(\frac{1}{2},1]$, contradicting the continuity of $c$.
\end{example}

\begin{remark}
If the primitive spectrum ${\rm Prim}(B)$ is Hausdorff, then  in Theorem \ref{T1} we can choose $z \in Z(B)$.
To see this, take selfadjoint elements $a,b\in B$ that satisfy the conditions of Theorem \ref{T1}.
Define $\U=\{P; a+b+P\not \in \CC 1+P\}$, $\V=\{P; a-b+P\not \in \CC 1+P\}$. According to the condition (3), 
$\U$ and $\V$ are disjoint sets. 
Since ${\rm Prim}(B)$ is a Hausdorff topological space, $P\mapsto \|z+P\|$ is a continuous function from ${\rm Prim}(B)$ to $\R$ for every $z\in B$ (see, e.g., \cite[Proposition 4.4.5]{Ped}). 
Hence, $\U$ and $\V$ are open sets. 
From the definition of $\U$ and the Hausdorff property of ${\rm Prim}(B)$ it follows that $a$ restricted to $\partial \U$ is a continuous scalar function.
By the Tietze extension theorem we can extend it to a continuous function on ${\rm Prim}(B)$, 
which is by the Dauns-Hofmann theorem an element $z_0\in Z(B)$. We can replace $a$ with $a-z_0$, 
and therefore assume that $a(P)=0$ for all $P\in\partial \U$. Let $\chi_\U$ be the characteristic function of $\U$, 
which is a Borel function and therefore an element of $C({\rm Prim}(B))''=Z(B)''\subseteq Z(B'')$. Define $z(P)=(1-2\chi_\U(P))a(P)+b(P)$.
This is a scalar-valued function that equals $-a(P)+b(P)$ for $P\in \U$ and $a(P)+b(P)$ for $P\in \U^\co$. 
Since $a(P)=0$ for $P\in\partial \U$, $z$ is a continuous function on ${\rm Prim}(B)$. Hence, $z\in Z(B)$. This implies the desired conclusion $b=(2\chi_\U-1)a+z$, where $2\chi_\U-1\in Z(B'')$ and $z\in Z(B)$.
\end{remark}

 \end{document}